\newtheorem{lemma}{Lemma}[section]
\newtheorem{theorem}[lemma]{Theorem}
\newtheorem{corollary}[lemma]{Corollary}
\newtheorem{proposition}[lemma]{Proposition}
\newtheorem{definition}[lemma]{Definition}
\newtheorem*{corollaryintroduction}{Corollary 3.5}
\theoremstyle{remark}
\newtheorem*{acknowledgements}{Acknowledgements}
\newtheorem{remark}[lemma]{Remark}
\newtheorem{example}[lemma]{Example}
\DeclareMathOperator{\spec}{\rm Spec}
\DeclareMathOperator{\defo}{\rm Def}
\numberwithin{equation}{section}
\begin{document}
\title[Stable Degenerations of Surfaces Isogenous to a Product II]{Stable Degenerations of Surfaces\\ Isogenous to a Product II}
\address{Liu Wenfei\\School of Mathematics Sciences\\Beijing University\\Beijing 100871\\P.R.China}
\email{liu.wenfei9@gmail.com}
\author{Liu Wenfei}

\begin{abstract}
In this note, we describe the possible singularities on a stable surface which is in the boundary of the moduli space of surfaces isogenous to a product. Then we use the $\mathbb Q$-Gorenstein deformation theory to get some connected components of the moduli space of stable surfaces.
\end{abstract}

\subjclass[2000]{14J10; 14B07}

\maketitle

\section*{Introduction}
Stable surfaces were first introduced by Koll\'ar and Shepherd-Barron in the paper \cite{KoSB}. These surfaces appear naturally as one-parameter limits of surfaces of general type: if $\tilde{\mathcal X}\rightarrow \Delta$ is a one-parameter $\mathbb Q$-Gorenstein family whose general fibres $\tilde{\mathcal X}_t$ are the canonical models of surfaces of general type and $\mathcal X\rightarrow \Delta$ is its canonical model, then the central fibre $\mathcal X_0$ is a stable surface. The singularities on stable surfaces are semi log canonical (cf. Definition \ref{definitionsemilogcanonical}) and a complete classification of them has already been given in \cite{KoSB}. The moduli problem for stable surfaces was solved later through the work of several authors (cf. \cite{Ko90,A94,Hac04,HK04,Ko08,AbH09} and Theorem \ref{compactmoduli}).

In view of the above general framework, examples are badly needed to illustrate the geometry of the moduli space of stable surfaces. One strategy is to compactify the well studied moduli spaces of smooth surfaces whose stable degenerations  can be described in a satisfactory way.  In this direction, van Opstall \cite{vO05,vO06} studied the moduli space of products of curves and the stable degenerations of surfaces isogenous to a product. Recently, Alexeev and Pardini \cite{AP09} gave an explicit compactification of moduli spaces of the more complicated Campedelli and Burniat surfaces, and Rollenske \cite{R09} did the same for very simple Galois double Kodaira fibrations.

Here we continue the work of van Opstall (\cite{vO06}) on the stable degenerations of surfaces isogenous to a product and give some connected components of the moduli space of stable surfaces. A surface isogenous to a product is the quotient of a product of two smooth curves by a free group action. Van Opstall showed that a stable degeneration of such surfaces is the quotient of a product of two stable curves $C\times D$ by a not necessarily free group action (cf. Theorem \ref{vO}). We go one step further to describe explicitly how the group acts on the product of two stable curves. To do this, we first establish some results on the smoothing of a stable curve with a group action (cf. Section \ref{sectionsmoothingcurve}). Then we can apply these results to the free smoothing of a product of two stable curves with a group action (see Sections \ref{sectionsmoothingu} and \ref{sectionsmoothingm}). Note that $X=(C\times D)/G$ is a stable degeneration if and only if $(C\times D,G)$ admits a free smoothing (cf. the discussion after Theorem \ref{vO}). Therefore we get our desired description of the group action on a product of stable curves (Prop. \ref{smoothingu} and \ref{smoothingm}). The basic tool used is Cartan's lemma which reduces the action of a stabilizer to a linear one.

Having described the singularities on a stable degeneration $X$ (Cor. \ref{singularityu} and \ref{singularitym}), we show that if the singularities are of certain type, namely $(U_{1a}),(U_{1b}),(U_{2a})$ or $(U_{2b})$, then the $\mathbb Q$-Gorenstein deformation of $X$ is unobstructed (Theorem \ref{surjective}) and hence the compactification of the moduli space considered by van Opstall  in fact yields connected components of the moduli space of stable surfaces:
\begin{corollaryintroduction}
  Let $S=(C\times D)/G$ be a surface isogenous to a product of unmixed type. Assume the pair $(C,G)$ is a triangle curve (i.e., $C/G\cong\mathbb{P}^1$, and $C\rightarrow C/G$ is branched over $3$ points). Let $M_S^{\text{top}}$ be the moduli space of smooth surfaces with the same topological type as $S$ and $\overline{M_S^{\text{top}}}$ the closure of $M_S^{\text{top}}$ in the moduli space $M^{st}_{a,b}$ with $a=K_S^2,b=\chi(\mathcal{O}_S)$. Then $\overline{M^{\text{top}}_S}$ consists of connected components of  $M^{st}_{a,b}$.
\end{corollaryintroduction}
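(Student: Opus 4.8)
\smallskip

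\noindent The plan is to prove that $\overline{M_S^{\text{top}}}$ is \emph{open} in $M^{st}_{a,b}$; since it is a closure it is automatically closed, and an open and closed subset is a union of connected components.

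\smallskip

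\noindent\textbf{Step 1: the boundary points.} First I would record what the points of $\overline{M_S^{\text{top}}}\setminus M_S^{\text{top}}$ look like. A triangle curve is rigid, so the pair $(C,G)$ has no nontrivial equivariant deformations; hence in any one-parameter $\mathbb Q$-Gorenstein degeneration of surfaces isogenous to $(C\times D)/G$ the curve $C$ stays constant. By Theorem \ref{vO} and Prop. \ref{smoothingu} the central fibre is then $X=(C\times D_0)/G$ with $C$ the \emph{same} smooth triangle curve and $D_0$ a stable curve carrying a $G$-action for which the quotient is stable; and because $C$ is smooth, the only singularities $X$ can acquire are those of Cor. \ref{singularityu} coming from a smooth factor together with a nodal factor, namely the types $(U_{1a}),(U_{1b}),(U_{2a}),(U_{2b})$ to which Theorem \ref{surjective} applies. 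Finally, for $[X]\in\overline{M_S^{\text{top}}}$ the curve $D_0$ is $G$-equivariantly smoothable (the smoothings of $X$ inside $M_S^{\text{top}}$ induce equivariant smoothings of $D_0$, which are the genuine ones by separatedness of the moduli of stable curves), and since $\defo(D_0,G)$ is smooth and connected with dense smooth locus, every stable curve obtained from $D_0$ by a small equivariant deformation is again equivariantly smoothable and gives, upon smoothing, a smooth surface of the same topological type as those parametrised by $M_S^{\text{top}}$.

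\smallskip

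\noindent\textbf{Step 2: local structure of $M^{st}_{a,b}$.} Now fix $[X]\in\overline{M_S^{\text{top}}}$ with $X=(C\times D_0)/G$. By the construction of the moduli space of stable surfaces (Theorem \ref{compactmoduli}), a neighbourhood of $[X]$ in $M^{st}_{a,b}$ is the quotient of the base $\defo^{qG}(X)$ of the $\mathbb Q$-Gorenstein Kuranishi family of $X$ by the finite group $\mathrm{Aut}(X)$. The rule $D'\mapsto(C\times D')/G$ gives a morphism of deformation functors $\defo(D_0,G)\to\defo^{qG}(X)$ whose source is smooth (equivariant deformations of a stable curve are unobstructed in characteristic zero); by Theorem \ref{surjective} this morphism is surjective and $\defo^{qG}(X)$ is itself smooth. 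Hence every point of $M^{st}_{a,b}$ sufficiently near $[X]$ is of the form $[(C\times D')/G]$ for a stable curve $D'$ obtained from $D_0$ by a small equivariant deformation.

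\smallskip

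\noindent\textbf{Step 3: conclusion.} Each such $(C\times D')/G$ is again stable, since stability is preserved in $\mathbb Q$-Gorenstein deformations. If $D'$ is smooth, $(C\times D')/G$ is a smooth surface isogenous to a product which, $\defo(D_0,G)$ being connected, is deformation equivalent to a smooth surface $(C\times E)/G$ with $E$ a smooth curve near $D_0$; by Step 1 the latter belongs to $M_S^{\text{top}}$, so $(C\times D')/G$ has the topological type of $S$ and defines a point of $M_S^{\text{top}}$. If $D'$ is singular, it is equivariantly smoothable by Step 1, so $(C\times D')/G$ is a limit of surfaces of the previous kind and lies in $\overline{M_S^{\text{top}}}$. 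In either case the neighbourhood of Step 2 is contained in $\overline{M_S^{\text{top}}}$. Therefore $\overline{M_S^{\text{top}}}$ is open, and being closed it is a union of connected components of $M^{st}_{a,b}$.

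\smallskip

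\noindent\textbf{The main obstacle.} Everything of substance is packed into the surjectivity in Theorem \ref{surjective}: that deforming only the curve $D_0$ already exhausts all $\mathbb Q$-Gorenstein deformations of $X$, equivalently that every local $\mathbb Q$-Gorenstein smoothing at a singularity of type $(U_{1a})$--$(U_{2b})$ is realised by smoothing the corresponding node of $D_0$, and that the global obstructions, living on a curve, vanish. Once this is granted the rest is the bookkeeping above; the one point still needing genuine care is the invariance of the topological type of $(C\times D')/G$ under equivariant smoothing of $D'$, which I would deduce from the connectedness of $\defo(D_0,G)$.
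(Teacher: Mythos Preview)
Your argument is correct and follows essentially the same route as the paper: identify the boundary points as $(C'\times D_0)/G$ with the rigid triangle-curve factor unchanged, observe that the resulting singularities fall into the range where Theorem~\ref{surjective} gives a smooth $\mathbb Q$-Gorenstein semiuniversal base, and conclude openness (the paper phrases this as ``$M^{st}_{a,b}$ is irreducible at $[X]$''). One small imprecision: since $C$ stays smooth, only types $(U_{1a})$ and $(U_{1b})$ can occur---the $(U_2)$ types require nodes on \emph{both} factors---so your list is overly generous, though this does not affect the applicability of Theorem~\ref{surjective}.
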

We should mention that the $\mathbb Q$-Gorenstein deformation theory set out by Hacking (\cite{Hac01} and \cite{Hac04}) is indispensable for our purpose.

\begin{acknowledgements}
 I would like to thank my advisor Prof. Fabrizio Catanese at Universit\"at Bayreuth for suggesting this research and for his patience and encouragement during many discussions. The idea of classifying the singularities on stable degenerations of surfaces isogenous to a product originates from him. I want to thank Fabio Perroni and S\"{o}nke Rollenske for several discussions during the preparation of this note. I am grateful to Stephen Coughlan for improving the English presentation. Thanks also go to my domestic advisor Prof. Jinxing Cai at Beijing University for encouragement.

This work was completed at Universit\"{at} Bayreuth under the financial support of China Scholarship Council \textquotedblleft High-level university graduate program\textquotedblright and DFG Forschergruppe 790 \textquotedblleft Classification of algebraic surfaces and compact complex manifolds\textquotedblright.
\end{acknowledgements}

\section{Preliminaries}
\subsection{Notation}
Let $G$ be a finite group acting on a set $A$.

For $\sigma \in G$, $|\sigma|$ is the order of $\sigma$.

For $a\in A$, $G_a:=\{g\in G|g\cdot a =a\}$ is the stabilizer of $a$. If $G_a\neq \{1\}$, we say that $a$ is a fixed point of the action. If $G_a=\{1\}$ for every $a\in A$, we say that $G$ acts freely on $A$.

$\mathbb Z_n$ denotes the cyclic group of order $n$.

We work over the field $\mathbb C$ of complex numbers.

\subsection{Surfaces isogenous to a product}\label{isogenoussurface}
\begin{definition}[\cite{Cat00}, Definition 3.1]\label{definitionisogenous}
A smooth projective surface $S$ is isogenous to a (higher) product if it is a quotient $S=(C\times D)/G$, where $C,D$ are smooth curves of genus at least two, and $G$ is a finite group acting freely on $C\times D$.
\end{definition}

Let $S=(C\times D)/G$ be a surface isogenous to a product. Let $G^\circ:=G\cap (\text{Aut} (C)\times \text{Aut}(D))$; then $G^\circ$ acts on the two factors $C,D$ and  acts on $C\times D$ via the diagonal action.  We say that $S$ is of \emph{unmixed} type if $G=G^\circ$; otherwise $S$ is said to be of \emph{mixed} type. By \cite[Prop. 3.13]{Cat00}, we always assume $G^\circ$ acts faithfully on  $C,D$.

By  \cite[Prop. 3.16]{Cat00}, surfaces $S=(C\times D)/G$ isogenous to a product and of mixed type are obtained as follows. There is a (faithful) action of a finite group $G^\circ$ on a curve $C$ of genus at least 2 and a nonsplit extension
\[
 1\rightarrow G^\circ\rightarrow G\rightarrow \mathbb{Z}_2 \rightarrow 1,
\]
yielding a class $[\varphi]$ in $\text{Out}(G^\circ)=\text{Aut}(G^\circ)/\text{Int}(G^\circ)$, which is of order $\leq 2.$ Once we fix a representative $\varphi$ of the above class, there exists an element $\tau'$ in $G\setminus G^\circ$ such that, setting $\tau=\tau'^2$, we have:
\begin{enumerate}
 \item [(I)]{$\varphi(\gamma)=\tau'\gamma\tau'^{-1}$},
 \item [(II)]{$G$ acts, under a suitable isomorphism of $C$ and $D$, by the formulae: $\gamma(P,Q)=(\gamma P,(\varphi\gamma) Q)$ for $\gamma$ in $G^\circ$; whereas the lateral class of $G^\circ$ consists of the transformations
\[
 \tau'\gamma(P,Q)=((\varphi\gamma)Q,\tau\gamma P).
\]}
\end{enumerate}
 Let $\Gamma$ be the subset of $G^\circ$ consisting of the transformations having some fixed point. Then the condition that $G$ acts freely amounts to:
\begin{enumerate}
 \item [(A)]{$\Gamma\cap\varphi(\Gamma)=\{1\}$}.
 \item [(B)]{there is no $\gamma$ in $G^\circ$ such that $\varphi(\gamma)\tau\gamma$ is in $\Gamma$.}
\end{enumerate}

The moduli space of surfaces isogenous to a product is partly illustrated in the following abridged theorem:
\begin{theorem}[\cite{Cat03}]\label{moduliisogenous}
 Let $S$ be a surface isogenous to a product. Then moduli space $M^{\text{top}}_S$ of surfaces with the same topological type as $S$ is either irreducible and connected or it contains two connected components which are interchanged by complex conjugation.
\end{theorem}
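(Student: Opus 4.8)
The plan is to translate the assertion into a statement about orbits of the combinatorial data underlying the product presentation, and to show via Teichm\"uller theory that the only such datum capable of separating connected components is the choice between a surface and its complex conjugate. In the unmixed case $S=(C\times D)/G$ sits in an extension $1\to\pi_1(C)\times\pi_1(D)\to\pi_1(S)\to G\to 1$, and the two faithful $G$-actions are encoded by a pair of epimorphisms $\theta_C\colon\Pi_C\twoheadrightarrow G$ and $\theta_D\colon\Pi_D\twoheadrightarrow G$, where $\Pi_C,\Pi_D$ are the orbifold fundamental groups of $C/G$ and $D/G$ (equivalently, two generating vectors of $G$ of prescribed genus and branching indices); in the mixed case one has a single curve $C$ with a $G^{\circ}$-action together with the extra element $\tau'$ as in (I)--(II), encoded in the same spirit. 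By \cite{Cat00} any surface $S'$ of the same topological type as $S$ is again isogenous to a product, of the same type, with the same group $G$ and the same genera, and its product presentation is recovered from the abstract group $\pi_1(S')$. The structural input I would invoke from \cite{Cat00} and \cite{Cat03} is that the topological type of $S$ determines the tuple $(G;g(C),g(D);\theta_C,\theta_D)$ only up to: (a) inner automorphisms of $G$, the action of the mapping class groups of the base orbifolds on generating vectors, and the controlled $\mathrm{Aut}(G)$-twist compatible with the presentation; (b) exchanging the two factors when $g(C)=g(D)$ and the two data are interchangeable; and (c) the single involution \emph{complex conjugation}, which precomposes each $\theta$ with the canonical orientation-reversing automorphism of the orbifold surface group (sending every standard generator to the inverse of a conjugate of itself).

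\emph{Irreducibility of a fixed stratum.} For fixed admissible data, the locus $\mathcal M(\theta_C,\theta_D)\subseteq M^{\mathrm{top}}_S$ of surfaces $(C'\times D')/G$ realising them is irreducible: a curve $C'$ carrying a $G$-action of type $\theta_C$ is cut out by a Fuchsian structure on the orbifold $C'/G$ lifting $\theta_C$, and the Teichm\"uller space of such structures is a polydisc, hence connected; the same holds for $D'$; and $\mathcal M(\theta_C,\theta_D)$ is the quotient of the (connected) product of these two Teichm\"uller spaces by the discrete group of marking-preserving mapping classes. Since the equivalence class of $(\theta_C,\theta_D)$ under the operations (a) and (b) is a deformation invariant of the surface, it is locally constant on $M^{\mathrm{top}}_S$; hence the finitely many strata $\mathcal M(\theta_C,\theta_D)$ are open and closed, and by their irreducibility they are exactly the connected components of $M^{\mathrm{top}}_S$.

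\emph{Counting and the role of conjugation.} By the structural input the topological type of $S$ determines the data up to (a), (b) and (c). Operations of type (a) are precisely the equivalence used to stratify $M^{\mathrm{top}}_S$, so they produce no new components; operation (b) produces none either, a surface being insensitive to the ordering of its two isotrivial fibrations. Hence the number of connected components is at most the number of equivalence classes modulo (c), which is at most two. Finally, since $S$ has complex dimension two, the complex conjugate complex structure induces the same orientation on the underlying $4$-manifold, so $[\overline S]\in M^{\mathrm{top}}_S$ and the anti-holomorphic involution $[S']\mapsto[\overline{S'}]$ of $M^{\mathrm{top}}_S$ realises operation (c): it fixes the unique component when $S$ is deformation equivalent to $\overline S$, and interchanges the two components otherwise.

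\emph{Main obstacle.} The whole argument rests on the structural input of the first paragraph: that the bare homeomorphism type of $S$ (equivalently, $\pi_1(S)$ together with $\chi(\mathcal O_S)$) rigidly recovers both isotrivial fibrations and the full branching data of their $G$-covers up to (a)--(c) and nothing more. This in turn depends on the intrinsic characterisation of the normal subgroup $\pi_1(C)\times\pi_1(D)\subset\pi_1(S)$ and on a Nielsen-realisation-type uniqueness statement for $G$-actions on curves --- precisely what \cite{Cat00} and \cite{Cat03} provide. Once that is granted, the remaining two steps are routine Teichm\"uller theory together with finite-group bookkeeping, and the only genuinely irreducible parameter surviving the bookkeeping is the choice of $S$ versus $\overline S$.
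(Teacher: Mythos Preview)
The paper does not prove this theorem at all: it is quoted verbatim as an ``abridged theorem'' from \cite{Cat03} and used as background, so there is no proof in the paper to compare against. Your sketch is in fact a faithful outline of the argument Catanese gives in \cite{Cat00,Cat03}: encode the surface by the pair of generating systems $(\theta_C,\theta_D)$ for $G$, observe that for fixed such data the corresponding locus is the image of a product of Teichm\"uller spaces and hence irreducible, and then invoke the structural result that the oriented topological type determines the data up to the mapping class group / $\mathrm{Aut}(G)$ action, factor swap, and complex conjugation. Since conjugation is an involution on the set of $(\mathrm{a})$--$(\mathrm{b})$ classes and the whole set is a single orbit under $(\mathrm{a})$--$(\mathrm{c})$, one gets at most two components, interchanged by $S\mapsto\overline S$.

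One point you should tighten: you correctly flag in your ``Main obstacle'' paragraph that everything hinges on the structural input, but as written your paragraph~1 slightly overstates what is immediate. That $\pi_1(S)$ recovers the subgroup $\pi_1(C)\times\pi_1(D)$ intrinsically (via the unique minimal normal subgroup of finite index isomorphic to a product of surface groups), and that the resulting equivalence on generating systems is \emph{exactly} (a)--(c) and nothing coarser, are genuine theorems of \cite{Cat00,Cat03}, not formalities; in particular the claim that (b) ``produces no new components'' requires the factor-swap to be realised by an actual homeomorphism of the $4$-manifold, which is part of that same structural package. With those citations made precise your sketch is correct.
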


\subsection{Cartan's lemma}
The following lemma is used throughout Section 2 for the (analytically) local analysis of the group actions.
\begin{lemma}[Cartan's lemma]
Let $z\in Z$ be an analytic singularity with Zariski tangent space $T$ and let $G$ be a finite group of automorphisms of $z\in Z$. Then there exists a $G$-equivariant embedding $z\in Z\rightarrow T \ni 0$.
\end{lemma}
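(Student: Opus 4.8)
The plan is to work ring-theoretically and reduce the statement to an averaging argument over $G$ (Maschke's theorem). I would fix once and for all an arbitrary local analytic embedding $(Z,z)\hookrightarrow(\mathbb C^n,0)$ and write $\mathcal O=\mathcal O_{Z,z}$ for the analytic local ring, with maximal ideal $\mathfrak m$. Since every $g\in G$ fixes the point $z$, the induced automorphism of $\mathcal O$ preserves $\mathfrak m$ and $\mathfrak m^2$, so $G$ acts linearly on the finite-dimensional space $\mathfrak m/\mathfrak m^2$; by definition this is the representation $T^\vee$, the dual of the Zariski tangent space. The goal is to exhibit inside $\mathfrak m$ a $G$-stable system of coordinates realizing this representation.

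First I would produce a finite-dimensional $G$-invariant subspace $V\subset\mathfrak m$ surjecting onto $\mathfrak m/\mathfrak m^2$: start from the subspace $W\subset\mathfrak m$ spanned by the restrictions to $Z$ of the coordinates $z_1,\dots,z_n$ (these generate $\mathfrak m$, hence span $\mathfrak m/\mathfrak m^2$), and set $V:=\sum_{g\in G}g\cdot W$, a finite sum of finite-dimensional subspaces. Since $G$ is finite and $\operatorname{char}\mathbb C=0$, Maschke's theorem provides a $G$-invariant complement $V_0$ of the $G$-subspace $V\cap\mathfrak m^2$ inside $V$; then the quotient map $\mathfrak m\to\mathfrak m/\mathfrak m^2$ restricts to a $G$-equivariant isomorphism $V_0\xrightarrow{\ \sim\ }\mathfrak m/\mathfrak m^2=T^\vee$. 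This averaging is the conceptual heart of the argument, but it is routine.

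Next I would choose a basis $w_1,\dots,w_N$ of $V_0$. By Nakayama's lemma applied to the Noetherian local ring $\mathcal O$, these $w_i$ generate $\mathfrak m$, so the map of analytic $\mathbb C$-algebras $\mathbb C\{t_1,\dots,t_N\}\to\mathcal O$, $t_i\mapsto w_i$, is surjective; geometrically, the germ morphism $\phi=(w_1,\dots,w_N)\colon (Z,z)\to(\mathbb C^N,0)$ is a closed immersion. Finally I would identify the target $\mathbb C^N$ with $T$: the linear coordinate functions on $\mathbb C^N$ are the $w_i$, which span $V_0\cong T^\vee$, so $\mathbb C^N\cong(T^\vee)^\vee=T$ as $G$-representations, with $G$ acting on the source $\mathbb C^N$ through the action dual to that on $\mathfrak m/\mathfrak m^2$. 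Since $V_0$ was chosen $G$-stable and $\phi$ is assembled from a $G$-equivariant choice of generators, $\phi$ intertwines the given action on $(Z,z)$ with a linear action on $(T,0)$, which is the asserted $G$-equivariant embedding.

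I expect the point requiring the most care to be the last one: verifying that the linear $G$-action on the ambient $\mathbb C^N\cong T$ produced this way really coincides with the intrinsic action on the Zariski tangent space (and not some twist of it), and keeping the duality bookkeeping between a representation and its dual straight. A secondary technical point is staying within the convergent (analytic) category throughout — in particular that the surjection $\mathbb C\{t_1,\dots,t_N\}\to\mathcal O$ exists as a map of convergent power series rings, not merely of formal completions — but this follows from standard properties of analytic local rings.
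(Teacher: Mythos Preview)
The paper does not actually prove Cartan's lemma: it is quoted as a classical result and used as a black box throughout Section~2, with no argument supplied. So there is nothing in the paper to compare your proposal against line by line.

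That said, your argument is the standard one and is correct. Finding a $G$-stable lift $V_0\subset\mathfrak m$ of $\mathfrak m/\mathfrak m^2$ via Maschke, then using that a basis of $V_0$ generates $\mathfrak m$ (Nakayama) to produce a surjection $\mathbb C\{t_1,\dots,t_N\}\twoheadrightarrow\mathcal O$ compatible with the linear $G$-action, is exactly how Cartan's linearization is usually established. Your two flagged subtleties are the right ones: the surjectivity in the convergent category is a standard property of analytic local algebras, and the equivariance/duality check is immediate once you note that the $G$-action on the ambient $\mathbb C^N$ is \emph{defined} by transporting the action on $V_0$ through the chosen basis, so the map $t_i\mapsto w_i$ is tautologically equivariant and the identification $\mathbb C^N\cong T$ is the double dual of $V_0\cong\mathfrak m/\mathfrak m^2$.
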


\subsection{$\mathbb{Q}$-Gorenstein deformation theory of stable surfaces}\label{sectionqgorensteindeformation}
 We recall the $\mathbb{Q}$-Gorenstein deformation theory of stable surfaces set out by Hacking.

\begin{definition}\label{definitionsemilogcanonical}
A (reduced) surface $X$ is said to have semi log canonical (slc) singularities if
\begin{enumerate}
 \item [(i)]{$X$ is $S_2$;}
 \item [(ii)]{$X$ has at most double normal crossing singularities in codimension $1$;}
 \item [(iii)]{the Weil divisor class $K_X$ is $\mathbb{Q}$-Cartier;}
 \item [(iv)]{if $\tilde{X}\rightarrow X$ is the normalization and $\tilde{D}\subset \tilde{X}$ is the preimage of the codimension-1 part of $X_{sing}$, then the pair $(\tilde{X},\tilde{D})$ is log canonical, i.e., for any resolution $\mu\colon\hat{X}\rightarrow \tilde{X}$, we have
\[
 K_{\hat{X}}+\mu^{-1}_*\tilde{D}=\mu^*(K_{\tilde{X}}+\tilde{D})+\sum a_iE_i
\]
with all $a_i\geq -1$.}
\end{enumerate}
 A stable surface is a projective slc surface with an ample dualizing sheaf.
\end{definition}

 Let $\mathcal{F}$ be a coherent sheaf. For $n\in\mathbb Z$, we define the $n$-th reflexive power of $\mathcal{F}$ by $\mathcal{F}^{[n]}:=(\mathcal{F}^{\otimes n})^{**}$, the double dual of the $n$-th tensor product. Let $\mathcal R$ be the category of Noetherian local $\mathbb C$-algebras with residue field $\mathbb C$.

\begin{definition}\label{definitionqgorensteinfamily}
Let $B$ a Noetherian scheme over $\mathbb C$. A flat projective morphism $f\colon\mathcal X\rightarrow B$ is called a $\mathbb Q$-Gorenstein family of stable surfaces if for every closed point $t\in B$, the fibre $\mathcal X_t$ is a stable surface and for every $n\in\mathbb Z$,   $\omega^{[n]}_{\mathcal X/B}$ commutes with any base change.

Let $R\in\mathcal R$.  A $\mathbb Q$-Gorenstein deformation of $X$ over $R$ is a $\mathbb Q$-Gorenstein family $f\colon \mathcal X\rightarrow \spec R$ together with an isomorphism $\mathcal X\otimes_R k(R)\cong X$. Isomorphisms between two $\mathbb Q$-Gorenstein deformations are defined in an obvious way.
\end{definition}

\begin{remark}
The hypothesis that $\omega^{[n]}_{\mathcal X/B}$ commutes with any base change for any $n$ is  called \emph{Koll\'ar's condition} and it implies that $\omega^{[N]}_{\mathcal X/B}$ is invertible for some $N$.
\end{remark}

For a stable surface $X$, we define a functor $\mathcal Def^{QG}_X\colon \mathcal R \rightarrow (Sets)$ as follows: for any $R\in \mathcal R$,
\[
 \mathcal Def^{QG}_X(R)=\{ \text{$\mathbb Q$-Gorenstein deformations of $X$ over $R$}\}/\simeq.
\]
The $\mathbb Q$-Gorenstein deformations of $X$ are exactly those deformations which, locally at each point $P\in X$, are induced by equivariant deformations of the canonical covering of  $P\in X$ (\cite[Prop. 10.13]{Hac01}). If  $X$ is Gorenstein, then the $\mathbb Q$-Gorenstein deformation theory of $X$ coincides with the ordinary one.
\begin{remark}
If a \emph{semiuniversal} $\mathbb Q$-Gorenstein deformation exists, we denote the base by $\defo^{QG}_X$. When we refer to the ordinary deformation of $X$, the semiuniversal base is denoted by $\defo_X$.
\end{remark}

Set $T^1_{QG,X}=\mathcal Def^{QG}_X(\mathbb C[\epsilon]/\epsilon^2)$, the space of first-order $\mathbb Q$-Gorenstein deformations. Let $\mathcal T^1_{QG,X}$ be the sheaf associated to the presheaf defined by
\[
 U\rightarrow \mathcal Def^{QG}_U(\mathbb C[\epsilon]/\epsilon^2),
\]
for any open subset $U$ of  $X$.  Then we have a local-to-global sequence:
\[
 0\rightarrow H^1(\mathcal{T}_X)\rightarrow T^1_{QG,X}\rightarrow H^0(\mathcal{T}^1_{QG,X})\rightarrow H^2(\mathcal{T}_X),
\]
where $\mathcal T_X=\mathcal Hom_{\mathcal O_X}(\Omega_X,\mathcal O_X)$ is the tangent sheaf of $X$  (\cite[Page 227]{Hac04}). Note that $T^1_{QG,X}$ is considered as the tangent space to the functor $\mathcal Def^{QG}_{X}$.

\subsection{Stable surfaces in this paper}\label{sectionstablesurface}
We are mostly interested in stable surfaces that are quotients of a product of two stable curves. We recall the definition of a stable curve first (\cite{DM}):
\begin{definition}
Let $g\geq 2$ be an integer. A stable curve of genus $g$ is a reduced, connected, 1-dimensional scheme $C$ over $\mathbb{C}$ such that:
\begin{enumerate}
 \item [(i)]{$C$ has only ordinary double points as singularities;}
 \item [(ii)]{if $E$ is a non-singular rational component of $C$, then $E$ meets the other components of $C$ in more than 2 points;}
 \item [(iii)]{$\text{dim H}^1(\mathcal{O}_C)=g$.}
\end{enumerate}
\end{definition}

\begin{proposition}\label{productofcurves}
 Let $C,D$ be stable curves. Then $C\times D$ is a stable surface.
\end{proposition}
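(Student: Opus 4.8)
The plan is to check directly that $X := C\times D$ satisfies conditions (i)--(iv) of Definition~\ref{definitionsemilogcanonical}, is projective, and has ample dualizing sheaf. Projectivity is immediate. The organizing remark is that a node is the hypersurface singularity $\{xy=0\}$, hence locally complete intersection and in particular Gorenstein; therefore $C$ and $D$ are Gorenstein curves, and since a product of Gorenstein schemes of finite type over $\mathbb{C}$ is again Gorenstein, $X$ is Gorenstein. This disposes of two of the four slc conditions at once: $X$ is Cohen--Macaulay, hence $S_2$, which is (i); and $K_X$ is Cartier, a fortiori $\mathbb{Q}$-Cartier, which is (iii). It also gives, via the compatibility of dualizing sheaves with products of Cohen--Macaulay schemes, that $\omega_X\cong \mathrm{pr}_1^*\omega_C\otimes \mathrm{pr}_2^*\omega_D$.

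Next I would analyze $X$ analytically-locally along its singular locus $(\operatorname{Sing} C\times D)\cup(C\times\operatorname{Sing} D)$. At a point $(p,q)$ where $q$ is a smooth point of $D$, a germ of $X$ at $(p,q)$ is the product of the germ $(C,p)$ with a smooth curve germ, so it is either smooth or isomorphic to $\{xy=0\}\subset\mathbb{A}^3$, a double normal crossing; by symmetry the same holds when $p$ is smooth. Hence in codimension~$1$ the only singularities of $X$ are double normal crossings, which is (ii). The points where both $p$ and $q$ are nodes give the local model $\{xy=zw=0\}\subset\mathbb{A}^4$ but occur in codimension~$2$, so they are irrelevant to (ii), and, being a complete intersection with conductor computed below, they do not obstruct slc.

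For (iv), I would use that normalization commutes with products over $\mathbb{C}$ and with separation of branches at nodes: the normalization of $X$ is $\widetilde X=\widetilde C\times\widetilde D$ with $\widetilde C,\widetilde D$ the (smooth) normalizations of $C,D$, so $\widetilde X$ is a smooth surface. The preimage $\widetilde D$ of the codimension-$1$ part of $X_{\mathrm{sing}}$ is $\bigl(\nu_C^{-1}(\operatorname{Sing} C)\times\widetilde D\bigr)\cup\bigl(\widetilde C\times\nu_D^{-1}(\operatorname{Sing} D)\bigr)$, that is, a finite union of fibres of the two projections from $\widetilde C\times\widetilde D$; this is a simple normal crossing divisor on the smooth surface $\widetilde X$, so $(\widetilde X,\widetilde D)$ is log smooth and hence log canonical, giving (iv). Finally, on a stable curve the dualizing sheaf has positive degree on every irreducible component (this is exactly where the stability conditions on rational and low-genus components are used), so $\omega_C$ and $\omega_D$ are ample and therefore $\omega_X\cong\mathrm{pr}_1^*\omega_C\otimes\mathrm{pr}_2^*\omega_D$ is ample, being an external product of ample line bundles on projective schemes.

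The one step demanding real care is the normalization computation: one must justify that $\widetilde{C\times D}=\widetilde C\times\widetilde D$ --- cleanly done by noting that $\widetilde C\times\widetilde D$ is normal and maps finitely and birationally onto $C\times D$ --- and that the double locus is precisely the grid of fibres described above, since it is this snc shape that makes the pair $(\widetilde X,\widetilde D)$ log canonical. By comparison, the $S_2$ property, the Cartier-ness of $K_X$, the codimension-$1$ normal crossing condition, and the ampleness of $\omega_X$ all follow quickly once the Gorenstein (indeed hypersurface/complete intersection) nature of the local models is in hand.
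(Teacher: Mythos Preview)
Your argument is correct. The paper itself gives no proof here but simply cites \cite[Prop.~3.1]{vO05}, so your direct verification of the slc conditions plus ampleness is presumably close to what that reference does; in any case it stands on its own. Two cosmetic points: you use the symbol $\widetilde D$ both for the normalization of the curve $D$ and for the boundary divisor on $\widetilde X$, which should be disambiguated; and the phrase ``with conductor computed below'' is never followed up --- but this is harmless, since the slc property at the codimension-$2$ points with local model $\{xy=zw=0\}$ is already subsumed by your verification of (iv) via the log smoothness of $(\widetilde X,\text{boundary})$.
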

\begin{proof}
See \cite[Prop. 3.1]{vO05}.
\end{proof}

\begin{proposition}
 Let $Z$ be a stable surface and $G$ a finite group acting on $Z$ with finitely many fixed points. Then $Z/G$ is also a stable surface.
\end{proposition}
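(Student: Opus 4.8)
The plan is to verify that $X:=Z/G$ satisfies conditions (i)--(iv) of Definition \ref{definitionsemilogcanonical} together with ampleness of $\omega_X$. Everything hinges on one observation: $Z$ being a surface and the fixed locus being finite, $G$ acts freely in codimension one, so the quotient map $\pi\colon Z\rightarrow X$ is finite and \'etale over the complement of a finite subset $P\subset X$. Also $X$ is projective, being a quotient of a projective scheme by a finite group.

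Condition (i) holds because $\mathcal O_X=(\pi_*\mathcal O_Z)^G$ is a direct summand of $\pi_*\mathcal O_Z$ as an $\mathcal O_X$-module (split off by the Reynolds operator $\tfrac1{|G|}\sum_{g\in G}g$), the sheaf $\pi_*\mathcal O_Z$ is $S_2$ over $X$ since $Z$ is $S_2$ and $\pi$ is finite, and a direct summand of an $S_2$ module is $S_2$. For condition (ii), over $X\setminus P$ the map $\pi$ is \'etale, so $X$ is \'etale-locally isomorphic to $Z$ there; since $Z$ has at worst ordinary double points in codimension one, so does $X$, and in particular $X$ is Gorenstein in codimension one.

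For (iii) and (iv) I would pass to normalizations. Let $\nu\colon\tilde Z\rightarrow Z$ be the normalization; the $G$-action lifts uniquely to $\tilde Z$, the lifted action is faithful, and each of its fixed points lies over a fixed point on $Z$, so $G$ acts freely in codimension one on $\tilde Z$ as well. Comparing integral closures inside the common total ring of fractions, $\tilde X:=\tilde Z/G$ is normal and is the normalization of $X$. Let $\tilde D\subset\tilde Z$ and $\tilde D_X\subset\tilde X$ be the respective conductor (double-point) divisors. Using again that $\pi$ is \'etale away from $P$ --- which is where the hypothesis on fixed points genuinely enters --- one checks that $\tilde D_X$ is reduced and equals the image of $\tilde D$, and that the induced finite morphism $\tilde\pi\colon\tilde Z\rightarrow\tilde X$ is \'etale in codimension one; hence there is no ramification term and $K_Z=\pi^*K_X$, $K_{\tilde Z}+\tilde D=\tilde\pi^*(K_{\tilde X}+\tilde D_X)$. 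Since $Z$ is stable, $K_Z$ and $K_{\tilde Z}+\tilde D$ are $\mathbb Q$-Cartier; descending a local trivialization of a suitable reflexive power along the finite quotients $\pi$ and $\tilde\pi$ (replace the trivializing section by the product over a $G$-orbit to make it semi-invariant, then descend) shows $K_X$ and $K_{\tilde X}+\tilde D_X$ are $\mathbb Q$-Cartier, which is (iii). Finally $(\tilde Z,\tilde D)$ is log canonical because $Z$ is stable, and a finite morphism preserves log canonicity under a crepant pullback, so $(\tilde X,\tilde D_X)$ is log canonical, which is (iv). For ampleness, choose $N$ with $\omega_Z^{[N]}$ and $\omega_X^{[N]}$ both invertible; then $\pi^*\omega_X^{[N]}=\omega_Z^{[N]}$ is ample, and ampleness descends along the finite surjection $\pi$, so $\omega_X$ is ample.

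The main obstacle is the bookkeeping around non-normality: the statements invoked in the previous paragraph --- reflexive pullback agreeing with divisorial pullback, descent of $\mathbb Q$-Cartierness and of log canonicity along a finite quotient, the crepant change-of-variables formula --- are usually recorded for normal varieties, so one must either check that they remain valid for schemes that are merely $S_2$ and Gorenstein in codimension one, or else systematically split the analysis into the codimension-one part (where $X$ is \'etale-locally just $Z$) and the codimension-two part (where $X$ may acquire quotient singularities and one argues on $\tilde X=\tilde Z/G$). The cleanest place to be careful is the identification of the normalization and conductor of $X$ with $\tilde Z/G$ and the image of $\tilde D$, which is precisely where the hypothesis ``finitely many fixed points'' is used.
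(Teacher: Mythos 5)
Your argument is correct, and it is essentially the argument the paper implicitly relies on: the paper gives no proof of its own here, merely citing the proof of Theorem~3.1 of van Opstall \cite{vO06}, which proceeds exactly along your lines --- the quotient map is \'etale in codimension one because the fixed locus is finite, so conditions (i) and (ii) and the identification of the normalization $\tilde X=\tilde Z/G$ with its double locus come for free, $\mathbb Q$-Cartierness of $K_X$ is obtained by descending an invariant section of a reflexive power of $\omega_Z$, log canonicity of $(\tilde X,\tilde D_X)$ follows from the crepant finite-cover lemma (\cite[Prop.~5.20]{KoM}), and ampleness descends along the finite surjection $\pi$. The only place needing a word more than you give is the descent step: one identifies $(\pi_*\omega_Z^{[M]})^G$ with $\omega_X^{[M]}$ because both are $S_2$ sheaves agreeing on the big open set where $\pi$ is \'etale, and then checks the descended section generates by a depth (local cohomology) argument --- precisely the kind of non-normal bookkeeping you flag at the end, and it does go through for $S_2$ surfaces that are Gorenstein in codimension one.
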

\begin{proof}
See the proof of \cite[Theorem 3.1]{vO06}.
\end{proof}

\begin{corollary}
Let $C,D$ be stable curves. Let $Z:=C\times D$ and $G$ a group acting on $Z$ with finitely many fixed points. Then $(C\times D)/G$ is a stable surface.
\end{corollary}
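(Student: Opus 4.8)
The plan is to deduce this corollary directly from the two preceding results by verifying that their hypotheses are met. First I would recall that Proposition \ref{productofcurves} tells us $Z = C\times D$ is a stable surface whenever $C$ and $D$ are stable curves. Then the corollary is just the special case of the preceding proposition (on quotients of a stable surface $Z$ by a finite group $G$ acting with finitely many fixed points) applied to this particular $Z$. So the entire content of the proof is the chain: $C,D$ stable $\Rightarrow$ $C\times D$ stable $\Rightarrow$ $(C\times D)/G$ stable.

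The one point that genuinely requires a sentence is making sure there is no hidden gap between ``$G$ acts on $Z$ with finitely many fixed points'' as stated in the corollary and the same phrasing in the proposition; since $Z$ here \emph{is} the product $C\times D$, there is nothing to check --- the hypotheses are literally identical. Thus I would write: \emph{By Proposition \ref{productofcurves}, $Z = C\times D$ is a stable surface. Since $G$ acts on $Z$ with finitely many fixed points, the previous proposition applies and shows that $Z/G = (C\times D)/G$ is a stable surface.} That is the whole argument.

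There is essentially no obstacle here: the statement is a formal corollary obtained by composing the two immediately preceding propositions, and the only ``work'' has already been done in their proofs (namely \cite[Prop. 3.1]{vO05} and the proof of \cite[Theorem 3.1]{vO06}). If one wanted to be slightly more careful, one could remark that ``finitely many fixed points'' is the natural codimension-$2$ condition ensuring that the quotient map $Z\to Z/G$ is étale in codimension $1$, which is what preserves the slc property and the ampleness of the dualizing sheaf; but since that reasoning is encapsulated in the cited proposition, it need not be reproduced.
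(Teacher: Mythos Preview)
Your proposal is correct and matches the paper's approach exactly: the paper gives no proof for this corollary, treating it as an immediate consequence of the two preceding propositions, which is precisely what you do. The only caveat is that the corollary's statement omits the word ``finite'' before ``group'', but this is the paper's standing convention (cf.\ the Notation section), so your implicit use of finiteness when invoking the preceding proposition is justified.
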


The stable degenerations of surfaces isogenous to a product are of the above form, as we will see from the following:
\begin{theorem}[\cite{vO06}, Theorem 3.1]\label{vO}
 Suppose $\mathcal X^*\rightarrow \Delta^*$ is a family of surfaces isogenous to a product over a  punctured disk. Then, possibly after a finite change of base, totally ramified over the origin, $\mathcal{X}$ can be completed to a family of stable surfaces over the disk whose central fibre is a quotient of a product of stable curves.
\end{theorem}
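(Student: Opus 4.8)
The plan is to reduce the statement to a question about families of curves with a group action, extend those by the properness of the moduli of stable curves, and then take the quotient. Over the connected base $\Delta^*$ the topological type of the fibres is locally constant, so the Galois group $G$, the normal subgroup $G^\circ$, the genera of the two factor curves, and the branch data of $C_t\to C_t/G^\circ$ and $D_t\to D_t/G^\circ$ are all independent of $t$. Each $\mathcal X_t^*$ carries a canonical connected \'etale $G$-cover $Z_t\cong C_t\times D_t$ (the one attached to $\ker(\pi_1(\mathcal X_t^*)\twoheadrightarrow G)$; after replacing $\Delta^*$ by a finite \'etale cover, which is harmless, this surjection may be assumed defined over the whole base). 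These covers assemble into a family $\mathcal Z^*\to\Delta^*$ with a fibrewise $G$-action and $\mathcal Z^*/G=\mathcal X^*$; the two natural fibrations in the unmixed case (resp.\ the one fibration together with the factor interchange in the mixed case) recover families of smooth curves $\mathcal C^*\to\Delta^*$, $\mathcal D^*\to\Delta^*$ of genus $\ge 2$ carrying a fibrewise $G^\circ$-action, with $\mathcal Z^*=\mathcal C^*\times_{\Delta^*}\mathcal D^*$.

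First I would extend the factor curves together with their symmetries. By the properness of the Deligne--Mumford moduli of stable curves (equivalently, semistable reduction for curves), there is a finite base change $\Delta\to\Delta$, $t\mapsto t^m$, totally ramified over the origin, after which $\mathcal C^*$ and $\mathcal D^*$ extend to families of stable curves $\mathcal C\to\Delta$, $\mathcal D\to\Delta$. Since the automorphisms of a stable curve form a finite group scheme, the relative automorphism schemes $\underline{\mathrm{Aut}}(\mathcal C/\Delta)$ and $\underline{\mathrm{Aut}}(\mathcal D/\Delta)$ are finite over $\Delta$, so by the valuative criterion the fibrewise $G^\circ$-actions extend uniquely across the origin; in the mixed case the factor interchange extends as well once $\mathcal C$ and $\mathcal D$ are identified, and one checks it stays compatible with $1\to G^\circ\to G\to\mathbb Z_2\to 1$. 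Setting $\mathcal Z:=\mathcal C\times_\Delta\mathcal D$, we obtain a flat family over $\Delta$ with a fibrewise $G$-action extending the one on $\mathcal Z^*$, and with central fibre $\mathcal Z_0=\mathcal C_0\times\mathcal D_0$, which is a stable surface by Proposition \ref{productofcurves}; in fact $\omega_{\mathcal Z/\Delta}=\omega_{\mathcal C/\Delta}\boxtimes\omega_{\mathcal D/\Delta}$ is a line bundle commuting with base change, so $\mathcal Z\to\Delta$ is a $\mathbb Q$-Gorenstein (indeed Gorenstein) family of stable surfaces.

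Now form the quotient $\mathcal X:=\mathcal Z/G\to\Delta$. Taking quotients by a finite group commutes with the flat base change $\Delta^*\hookrightarrow\Delta$, so $\mathcal X|_{\Delta^*}=\mathcal Z^*/G=\mathcal X^*$ and $\mathcal X\to\Delta$ completes the given family, with central fibre $\mathcal X_0=(\mathcal C_0\times\mathcal D_0)/G$ a quotient of a product of stable curves. What remains is to verify that $\mathcal X\to\Delta$ is a family of \emph{stable} surfaces, i.e.\ that $\mathcal X_0$ is slc and the family is $\mathbb Q$-Gorenstein in the sense of Definition \ref{definitionqgorensteinfamily}. By the corollary to Proposition \ref{productofcurves} it suffices to arrange that $G$ acts on $\mathcal C_0\times\mathcal D_0$ with only finitely many fixed points; writing $g=(g_1,g_2)$ in the unmixed case one has $\mathrm{Fix}(g)=\mathrm{Fix}_{\mathcal C_0}(g_1)\times\mathrm{Fix}_{\mathcal D_0}(g_2)$, so this fails exactly when some nontrivial $g\in G$ fixes an entire irreducible component of $\mathcal C_0$ (or of $\mathcal D_0$) pointwise while its partner still has a fixed point on the other curve.

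I expect the genuine obstacle to be exactly this last point: ensuring that the limiting $G$-action on the central fibre has no fixed divisor, so that the corollary applies and $\mathcal X_0$ comes out stable. I would handle it by not taking an arbitrary semistable extension but rather the \emph{stable model of the $G^\circ$-cover} in the sense of admissible covers (Harris--Mumford), equivalently the limit in the stack of twisted stable curves: this controls precisely how the branch points of $C_t\to C_t/G^\circ$ collide as $t\to 0$, and one shows the resulting $G^\circ$-action on the limiting stable curve is faithful and fixes no irreducible component pointwise, so $\mathrm{Fix}(g)$ is finite for all $g\neq 1$; if such fixed divisors should nonetheless survive one performs a further base change and/or blow-up of $\mathcal C$ and $\mathcal D$ to remove them. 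Once this is achieved, the last verification — that $\mathcal X_0$ is slc and $\mathcal X\to\Delta$ satisfies Koll\'ar's condition — is a local computation at the (finitely many) images of fixed points: by Cartan's lemma the stabilizer acts linearly on the relevant threefold germ of $\mathcal Z$, and one reads off that the quotient germ is a slc singularity whose smoothing inside $\mathcal X$ is $\mathbb Q$-Gorenstein. This is precisely the local analysis carried out in the next section, so modulo that analysis the theorem follows; the only non-formal input is the control of the limiting fixed locus and the retention of the product-quotient description of $\mathcal X_0$.
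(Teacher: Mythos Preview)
The paper does not prove this theorem; it is quoted from \cite{vO06}, and what follows the statement is only an explicit description of the construction (split into the unmixed and mixed cases) extracted from van Opstall's proof. Your outline matches that construction: pass to the \'etale $G$-cover, extract the two factor families of smooth curves with $G^\circ$-action, extend each to a family of stable curves after a finite base change, extend the $G$-action, take the fibre product and then the quotient. So at the level of strategy your proposal agrees with what the paper records.

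Where you diverge is in what you flag as ``the genuine obstacle'': you worry that some nontrivial $g$ might fix an entire irreducible component of $\mathcal C_0$ or $\mathcal D_0$, and propose invoking the admissible-cover/twisted-curve compactification (and, failing that, further base changes or blow-ups) to rule this out. In the paper this is a non-issue and is dispatched by the elementary Lemma~\ref{lemma1}: once you have \emph{any} $G$-equivariant smoothing $\mathcal C\to\Delta$, a nontrivial $\tau$ cannot act as the identity on a component of $\mathcal C_0$, since by Cartan's lemma it would then act trivially on a neighbourhood in $\mathcal C$ and hence be the identity in $G$. Thus faithfulness on the central fibre (and hence finiteness of the fixed locus of each element) is automatic from the extension you already built; no admissible-cover machinery or further modification is needed. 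Your remaining remarks---that $\mathcal X_0=(\mathcal C_0\times\mathcal D_0)/G$ is then stable by the corollary to Proposition~\ref{productofcurves}, and that the $\mathbb Q$-Gorenstein property is a local check at the finitely many non-free orbits---are in line with how the paper proceeds.
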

According to the proof of the above theorem in \cite{vO06}, we give an explicit construction of the stable degenerations here. There are two cases:

\textbf{Unmixed case}: we have, up to finite base change, $G$-equivariant smoothings of stable curves (cf. Section 2.1) $\mathcal{C}\rightarrow\Delta$ and  $\mathcal{D}\rightarrow\Delta$ such that the completion $\mathcal X\rightarrow \Delta$ of $\mathcal X^*\rightarrow\Delta^*$ is  $(\mathcal{C}\times_{\Delta}\mathcal{D})/G\rightarrow \Delta.$ In particular, the central fibre is $(\mathcal C_0\times \mathcal D_0)/G$ where $G$ acts faithfully on $\mathcal C_0, \mathcal D_0$ and acts diagonally on $\mathcal C_0\times \mathcal D_0$. However, the action of $G$ on $\mathcal C_0\times \mathcal D_0$ is not necessarily free.

\textbf{Mixed case}: there exists a finite group $G^{\circ}$, a $G^{\circ}$-equivariant smoothing $\mathcal{C}\rightarrow\Delta$ of stable curves and a nonsplit extension
\[
 1\rightarrow G^{\circ}\rightarrow G\rightarrow\mathbb{Z}_2\rightarrow 1
\]
yielding an automorphism $\varphi$ of $G^{\circ}$, such that the pairs $(\mathcal{C}_t,G)$ with $t\neq 0$ satisfy properties $(I),(II),(A),(B)$ in Section \ref{isogenoussurface}. On the central fibre $\mathcal{C}_0$, we still have a $G$-action that enjoys properties $(I),(II)$, but not necessarily $(A),(B)$, i.e., the action of $G$ on $\mathcal{C}_0\times\mathcal{C}_0$ is not necessarily free. Now the completion $\mathcal X\rightarrow\Delta$ is $(\mathcal{C}\times_{\Delta}\mathcal{C})/G\rightarrow\Delta$, where the action of $G^\circ$ on the second factor is twisted by $\varphi$.

In both cases, the stable degeneration $\mathcal{X}_0$ is of the form $(C\times D)/G$, where $C,D$ are stable curves and $G$ acts on $C\times D$ with finitely many fixed points. Tautologically the pair $(C\times D,G)$ admits a free smoothing, i.e., a one-parameter family $\mathcal{C}\times_{\Delta}\mathcal{D}\rightarrow \Delta$ such that the following hold:
\begin{enumerate}
  \item[(i)]{$\mathcal{C}_0\times\mathcal{D}_0\cong C\times D$;}
 \item[(ii)]{The fibre $\mathcal{C}_t\times\mathcal{D}_t$ over $t\neq 0$ is smooth;}
 \item[(iii)]{$G$ acts on $\mathcal{C}\times_\Delta\mathcal{D}$ preserving the fibres and the action of $G$ on the central fibre coincides with the given action of $G$ on $C\times D$;}
 \item[(iv)]{$G$ acts freely on the general fibres $\mathcal{C}_t\times\mathcal{D}_t$ for $t\neq 0$.}
\end{enumerate}

\subsection{The moduli space of stable surfaces}\label{sectionmoduli}
Let $(Sch)/\mathbb{C}$ be the category  of Noetherian $\mathbb{C}$-schemes. We define the moduli functor of stable surfaces: for any $B\in(Sch)/\mathbb{C}$,
\begin{align*}
 \mathcal{M}_{a,b}^{st}(B)=\{\mathcal{X}/B\ |\ \mathcal{X}/B \text{ is a $\mathbb{Q}$-Gorenstein family of stable surfaces over } B \\
\text{ and for any closed point } t\in B,K_{\mathcal{X}_t}^2=a,\chi(\mathcal{O}_{\mathcal{X}_t})=b   \}/\cong.
\end{align*}

\begin{theorem}\label{compactmoduli}
 There is a projective coarse moduli space $M^{st}_{a,b}$ for $\mathcal{M}_{a,b}^{st}$.
\end{theorem}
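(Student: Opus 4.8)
This is a deep result assembled from the work cited in the introduction; here is the architecture I would follow. The plan is to realize $M^{st}_{a,b}$ as the quotient of a quasi-projective parameter space by a reductive group, and then to produce an ample class on the quotient. The first step is \textbf{boundedness}: the stable surfaces $X$ with $K_X^2=a$ and $\chi(\mathcal{O}_X)=b$ form a bounded family. By the slc hypothesis the index of $X$ (the least $N$ with $\omega_X^{[N]}$ invertible) is controlled by the finitely many local analytic types of slc surface singularities recalled in Section \ref{sectionqgorensteindeformation}, and by the boundedness results of Alexeev \cite{A94} there is an $N=N(a,b)$, independent of $X$, such that $\omega_X^{[N]}$ is very ample with vanishing higher cohomology and a fixed Hilbert polynomial $P$. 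Hence every such $X$ embeds in a fixed $\mathbb{P}^M$, and all of them are parametrized by a locally closed subscheme $H_0\subset\mathrm{Hilb}^P(\mathbb{P}^M)$.

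The subtle point, and the one I expect to be the main obstacle, is that the functor $\mathcal{M}^{st}_{a,b}$ remembers the $\mathbb{Q}$-Gorenstein structure: a family is required to satisfy Koll\'ar's condition, namely that $\omega^{[n]}_{\mathcal{X}/B}$ commute with base change for every $n\in\mathbb{Z}$. I would use the theory of the Koll\'ar functor and of hulls/husks developed in \cite{Ko08} and \cite{AbH09} to show that inside $H_0$ there is a locally closed subscheme $H$ representing exactly this condition, i.e.\ such that the universal family over $H$ is $\mathbb{Q}$-Gorenstein and is final among families whose total space carries the tautological polarization. This is where most of the technical work lies, because forming the reflexive powers $\omega^{[n]}$ is not flat-local in any naive sense.

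Next I would take the \textbf{quotient}. The group $PGL_{M+1}$ acts on $H$ with orbits the isomorphism classes of polarized stable surfaces, so the quotient stack $[H/PGL_{M+1}]$ represents $\mathcal{M}^{st}_{a,b}$; since automorphism groups of stable surfaces are finite (as $K_X$ is ample), this is a separated Deligne--Mumford stack of finite type, and by the Keel--Mori theorem it admits a coarse moduli space $M^{st}_{a,b}$, a priori an algebraic space of finite type over $\mathbb{C}$. \textbf{Separatedness} amounts to the uniqueness of the limit of a one-parameter $\mathbb{Q}$-Gorenstein family of stable surfaces over a punctured disk, which follows from the uniqueness of log canonical models applied to a common resolution of two competing limits. \textbf{Properness} is the valuative criterion: given such a family over $\Delta^*$, after a finite base change it completes to a family of stable surfaces over $\Delta$ — this is precisely the input of relative semistable reduction / three-fold MMP over a curve, of which Theorem \ref{vO} is the special case relevant to us. Together these give that $M^{st}_{a,b}$ is a proper algebraic space.

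Finally, to upgrade properness to \textbf{projectivity} I would invoke Koll\'ar's ampleness criterion \cite{Ko90}: for $n$ divisible by $N$ the sheaves $\lambda_n:=\det\bigl(f_*\omega^{[n]}_{\mathcal{X}/B}\bigr)$ are compatible with base change and are semipositive, by Koll\'ar's semipositivity for pushforwards of relative dualizing sheaves extended to the $\mathbb{Q}$-Gorenstein slc setting, and a suitable combination of them descends to an ample line bundle on $M^{st}_{a,b}$; hence $M^{st}_{a,b}$ is a projective scheme. In summary, the hard part will be making the $\mathbb{Q}$-Gorenstein moduli functor representable by a well-behaved parameter space (the Koll\'ar condition of the second step) and then constructing the polarization in the last step; boundedness and the two valuative criteria, while nontrivial, are by now standard consequences of the minimal model program together with the classification of slc surface singularities.
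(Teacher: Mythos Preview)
Your outline is correct and follows the standard architecture (boundedness via \cite{A94}, local closedness of Koll\'ar's condition via \cite{Ko08,AbH09}, separatedness and properness from the MMP, and projectivity via \cite{Ko90}). The paper itself does not give a proof at all: its entire argument is the sentence ``See \cite[Section 7.C]{Kov09} and \cite{Ko08}.'' So there is nothing to compare beyond noting that what you have written is precisely an expansion of what those references contain; in that sense your approach and the paper's are the same, only you have unpacked the black box whereas the paper simply invokes it.
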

\begin{proof}
 See \cite[Section 7.C]{Kov09} and \cite{Ko08}.
\end{proof}

We shall get some connected components of this moduli space by studying the $\mathbb{Q}$-Gorenstein deformations of stable degenerations of surfaces isogenous to a product in Section 3.

\section{Stable degenerations of surfaces isogenous to a product}
In this section, we will give a precise description of possible singularities on stable degenerations of surfaces isogenous to a product. This is a careful improvement of Theorem \ref{vO}, which allows one to further study the $\mathbb Q$-Gorenstein deformations of the stable degenerations.

\subsection{Smoothings of stable curves with group actions}\label{sectionsmoothingcurve}

Our surfaces can be constructed by taking finite quotients of products of two stable curves, so their geometry is closely related to that of stable curves. In this section, we will establish some facts about smoothings of stable curves with group actions. More precisely, in the case when the group action admits a smoothing, we will show what the stabilizers on the central fibre can be and how they act locally analytically. These facts are used in Sections 2.2 and 2.3 for the smoothing of a product of stable curves with a group action.

\begin{definition}
 Let $G$ be a finite group acting faithfully on a stable curve $C$. A smoothing of the pair $(C,G)$ is a (flat) family of stable curves $\mathcal{C}\rightarrow \Delta$ over the unit disk such that
\begin{enumerate}
 \item[(i)]{the central fibre $\mathcal{C}_0$ is isomorphic to $C$;}
 \item[(ii)]{The fibre $\mathcal{C}_t$ of the family over $t\neq 0$ is smooth;}
 \item[(iii)]{$G$ acts on $\mathcal{C}$ preserving the fibres, and the action on $\mathcal{C}_0$ coincides with the given one on $C$ under the isomorphism of (i).}
\end{enumerate}
\end{definition}

\begin{remark}
 We also call $\mathcal{C}\rightarrow \Delta$ a $G$-equivariant smoothing of $C$.
\end{remark}

Now let $(C,G)$ be as in the definition and assume $\mathcal{C}\rightarrow \Delta$ is a smoothing of $(C,G)$.
\begin{lemma}\label{lemma1}
 There are only finitely many points on $C$ having non-trivial stabilizers, or, equivalently, there are only finitely many fixed points for the $G$-action.
\end{lemma}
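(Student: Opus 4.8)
The plan is to reduce to showing that for each non-trivial $\sigma\in G$ the fixed locus $\mathrm{Fix}(\sigma)=\{p\in C\mid \sigma(p)=p\}$ is finite; since the set of points with non-trivial stabiliser equals $\bigcup_{1\neq\sigma\in G}\mathrm{Fix}(\sigma)$ and $G$ is finite, this suffices, and the two formulations in the statement are literally the same by the definition of a fixed point. Each $\mathrm{Fix}(\sigma)$ is Zariski closed in the one-dimensional scheme $C$, so it fails to be finite precisely when it contains some irreducible component $C'$ of $C$, i.e. precisely when $\sigma$ restricts to the identity on $C'$. Thus everything comes down to the claim: \emph{no non-trivial element of $G$ acts as the identity on an irreducible component of $C$.}

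To prove this I would work on the total space $\mathcal C$ of the smoothing, using two standard structural facts. First, $\mathcal C$ is an integral surface: its generic fibre is a smooth connected (hence irreducible) projective curve, and flatness over $\Delta$ forces every irreducible component of $\mathcal C$ to dominate $\Delta$, so there is only one. Second, $G$ acts faithfully on $\mathcal C$, since an element acting trivially on $\mathcal C$ would act trivially on $\mathcal C_0\cong C$, contradicting the hypothesis. Now suppose $1\neq\sigma\in G$ fixes an irreducible component $C'$ of $C$ pointwise, and choose a point $p\in C'$ that is a smooth point of $C$ (possible, as $C$ has only finitely many singular points). Flatness of $\mathcal C\to\Delta$ over the smooth curve $\Delta$ together with smoothness of the fibre $\mathcal C_0=C$ at $p$ implies that $\mathcal C$ is smooth at $p$. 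Hence, applying Cartan's Lemma at $p$ (here essentially trivial, $\mathcal C$ being already smooth there) and using that $\sigma$ has finite order and fixes $p$, we may choose local analytic coordinates $(\xi,t)$ at $p$ in which $\sigma$ acts linearly and diagonally, with $t$ the pullback of the coordinate on $\Delta$; then, since $\sigma$ preserves the fibres of $\mathcal C\to\Delta$, we have $\sigma^*t=t$, while $\sigma^*\xi=\lambda\xi$ for some root of unity $\lambda$, and $\mathcal C_0=\{t=0\}$ locally. Because $\sigma$ fixes $\mathcal C_0$ pointwise near $p$ we get $\lambda\xi\equiv\xi\pmod{(t)}$, which forces $\lambda=1$; hence $\sigma$ is the identity on a neighbourhood of $p$, therefore on the integral surface $\mathcal C$, therefore on $C$ — contradicting faithfulness. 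This proves the claim, and with it the lemma.

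I do not expect a genuine obstacle: the only delicate step is the local linearisation at $p$, which is exactly where Cartan's Lemma (in its simplest, smooth incarnation) and the finiteness of $|\sigma|$ enter, together with the routine book-keeping that $\mathcal C$ is an integral surface and is smooth at every point over which the fibre of $\mathcal C\to\Delta$ is smooth.
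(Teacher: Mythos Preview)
Your proof is correct and follows essentially the same approach as the paper: assume some $\sigma\neq 1$ fixes an irreducible component pointwise, pick a smooth point of $C$ on that component, linearise the action on the smooth total space $\mathcal{C}$ there, and deduce that $\sigma$ acts as the identity. Your write-up is in fact a bit more careful than the paper's, since you spell out why $\mathcal{C}$ is integral and why acting trivially on an open set forces $\sigma=1$ globally.
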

\begin{proof}
 Otherwise there is a $\tau\neq 1\in G$ acting as identity on some irreducible component $D$ of $C$. Pick a smooth point $P$ of $D$. Then $\mathcal{C}$ is smooth around $P$ and we can take local coordinate $z$ of $D$ and local coordinate $t$ of $\Delta$ such that $(z,t)$ form local coordinates of $\mathcal{C}$ around $P$ and  $\tau$ acts as $(z,t)\mapsto (z,t)$. So $\tau=1\in G$ is the identity, a contradiction.
\end{proof}

\begin{lemma}\label{lemma2}
If $P\in C$ is a smooth point, then $G_P$ is cyclic.
\end{lemma}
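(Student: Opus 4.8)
The plan is to work analytically locally around the smooth point $P\in C$ inside the total space $\mathcal C$ of the smoothing. Since $P$ is a smooth point of $C$ and $\mathcal C_0=C$ sits in $\mathcal C$ as a Cartier divisor (the fibre over $0\in\Delta$), the surface $\mathcal C$ is smooth at $P$: choose local analytic coordinates $(z,t)$ on $\mathcal C$ at $P$ with $t$ the pullback of the coordinate on $\Delta$, so that $\mathcal C_0=\{t=0\}$ and $z$ restricts to a local coordinate on $C$ at $P$. The stabilizer $G_P$ acts on this bidisk, preserving the fibration $t$, hence it acts on the $t$-axis by a character $\chi\colon G_P\to\mathbb C^*$.

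First I would show $\chi$ is injective, which is exactly the content of Lemma \ref{lemma1} in local form: if $\sigma\in G_P$ acts trivially on the $t$-coordinate, then $\sigma$ fixes every nearby fibre $\mathcal C_t$ and in particular acts on the smooth curve $\mathcal C_t$ ($t\neq 0$) with a fixed point; but more directly, $\sigma$ fixing $t$ means $\sigma$ restricts to an automorphism of $C$ near $P$ — and by an argument as in Lemma \ref{lemma1}, an element of $G$ fixing a smooth point of a component and acting trivially on the transverse $t$-direction must be the identity. (Concretely: by Cartan's Lemma applied to the smooth surface germ $P\in\mathcal C$, we may assume $G_P$ acts linearly on the $2$-dimensional tangent space; the subspace $\{t=0\}$, i.e. $T_PC$, is preserved, and the action on the $t$-line is via $\chi$; an element in $\ker\chi$ then acts trivially on all of $\mathcal C$ near $P$ since it fixes $t$ and, acting on the curve, fixes a smooth point with trivial derivative — here one uses that a nontrivial automorphism of a curve fixing a smooth point acts nontrivially on the tangent line, combined with faithfulness.)

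Then, since $\chi\colon G_P\hookrightarrow\mathbb C^*$ is an injection into an abelian group with torsion subgroup cyclic, $G_P$ is cyclic. More carefully: $G$ is finite, so $G_P$ is finite, so $\chi(G_P)$ is a finite subgroup of $\mathbb C^*$, hence cyclic; and $\chi$ injective then forces $G_P$ cyclic.

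The main obstacle is justifying the injectivity of $\chi$ rigorously, i.e. ruling out a nontrivial element of $G_P$ that acts as identity on the central fibre direction $t$: one must combine the faithfulness of the $G$-action on $\mathcal C$ (preserving fibres) with the fact that an automorphism of a smooth curve germ fixing the origin is determined, up to the usual considerations, by its linear part, so a nontrivial stabilizer element cannot act trivially on $T_PC$ either — but in fact we only need it to act faithfully on the $t$-direction, which follows because an element acting trivially on $t$ preserves each $\mathcal C_t$ and hence, were it nontrivial, would violate Lemma \ref{lemma1} applied to a general (smooth) nearby fibre, on which $G$ acts faithfully with only finitely many fixed points, whereas fixing $P$ with trivial action on $t$ propagates to fixing a $1$-parameter family of points. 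Once this is in hand, the cyclicity is immediate.
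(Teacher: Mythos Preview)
Your argument has a genuine gap: the character $\chi$ on the $t$-axis is \emph{trivial}. By the paper's definition of a smoothing of $(C,G)$, the group $G$ acts on $\mathcal C$ \emph{preserving the fibres}, i.e.\ each $\mathcal C_t$ is mapped to itself and the induced action on the base $\Delta$ is the identity. This is made explicit in the surrounding lemmas, where the local action is always written in the form $(\,\cdot\,,t)\mapsto(\,\cdot\,,t)$. Hence every element of $G_P$ lies in $\ker\chi$, and your attempt to prove $\chi$ injective cannot succeed. Your fallback (``an element acting trivially on $t$ \ldots\ propagates to fixing a $1$-parameter family of points'') does not help either: a nontrivial $\tau\in G_P$ acts in suitable coordinates as $(z,t)\mapsto(\xi z,t)$ and indeed fixes the section $\{z=0\}$, but this is perfectly compatible with a faithful $G$-action on each smooth fibre having isolated fixed points, so nothing is contradicted.

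The paper's proof uses the \emph{other} direction: the representation of $G_P$ on $T_PC$, the tangent line to the curve $C$ itself. Cartan's lemma (or the elementary fact that a finite-order automorphism of a smooth one-dimensional germ is linearizable) gives an injection $G_P\hookrightarrow GL(T_PC)\cong\mathbb C^*$, and a finite subgroup of $\mathbb C^*$ is cyclic. You actually state this key fact parenthetically (``a nontrivial automorphism of a curve fixing a smooth point acts nontrivially on the tangent line''), but you invoke it in service of the wrong goal. That sentence \emph{is} the whole proof; the detour through the total space $\mathcal C$ and the $t$-direction is unnecessary and, as set up, incorrect.
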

\begin{proof}
 There is an embedding $G_P\hookrightarrow GL(T_P C)$, where $T_P C\cong \mathbb C$ is the tangent space of $C$ at $P$. Since $T_P C$ is a 1-dimensional vector space, $GL(T_PC)\cong\mathbb{C}^*$. So $G_P$, being a finite subgroup of $\mathbb{C}^*$, is cyclic.
\end{proof}

\begin{lemma}\label{lemma3}
If $P\in C$ is a smooth point, and $1\neq\tau\in G_P$, then
$\tau$ also fixes points on $\mathcal{C}_t$, for $t\neq 0$.
\end{lemma}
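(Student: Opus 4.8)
The plan is to linearize the action of the stabilizer near $P$ by Cartan's Lemma and then to observe that the fixed locus of $\tau$ is a smooth curve transverse to the fibres of $\mathcal C\to\Delta$, so that it meets every nearby fibre.

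I would first note that $\mathcal C$ is smooth of dimension $2$ at $P$: since $\mathcal C\to\Delta$ is flat and $\Delta$ is a smooth curve, the parameter $t$ is a nonzerodivisor in $\mathcal O_{\mathcal C,P}$, while $\mathcal O_{\mathcal C,P}/(t)=\mathcal O_{C,P}$ is regular of dimension $1$ because $P$ is a smooth point of $C=\mathcal C_0$; hence $\mathcal O_{\mathcal C,P}$ is regular of dimension $2$. Since $G_P$ fixes $P$, Cartan's Lemma gives a $G_P$-equivariant analytic isomorphism from a neighbourhood $U$ of $P$ in $\mathcal C$ onto a neighbourhood of $0$ in $T:=T_P\mathcal C\cong\mathbb C^2$, with $G_P$ acting linearly. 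Under this isomorphism the fixed locus $\mathrm{Fix}(\tau)\cap U$ becomes the linear subspace $T^\tau$ of $\tau$-invariant vectors, so $\mathrm{Fix}(\tau)$ is smooth at $P$ with tangent space $T^\tau$.

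The key step is to determine the eigenvalues of $\tau$ on $T$. Because the $G$-action preserves the fibres of $\mathcal C\to\Delta$, the projection $t\colon U\to\Delta$ is $G_P$-invariant; hence $dt_P\colon T\to T_0\Delta$ is $\tau$-equivariant for the trivial action on the target, and it is surjective with kernel $T_PC$ (as $C$ is a smooth fibre at $P$). Now $T_PC$ is $\tau$-invariant, hence a $\tau$-eigenline, say with eigenvalue $\lambda$, and $\tau$ acts on $T/T_PC$ by $1$, so the eigenvalues of $\tau$ on $T$ are $\lambda$ and $1$. Moreover $\lambda\neq 1$: otherwise $\tau$ would act trivially on $T$, hence on $U$, fixing the infinitely many points of $U\cap C$ and contradicting Lemma \ref{lemma1}. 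Therefore $T^\tau$ is the $1$-eigenline, it is complementary to $T_PC=\ker dt_P$, and $dt_P$ carries it isomorphically onto $T_0\Delta$.

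Consequently $t$ restricts to a biholomorphism of $\mathrm{Fix}(\tau)\cap U$ onto a neighbourhood of $0$ in $\Delta$; so for every sufficiently small $t\neq 0$ there is a point $Q\in\mathrm{Fix}(\tau)\cap U$ on $\mathcal C_t$, and this $Q$ is a fixed point of $\tau$ on $\mathcal C_t$ (after shrinking $\Delta$ if one wants this for all $t\neq 0$). The only delicate point is the eigenvalue computation: once one knows that $\tau$ acts trivially transverse to the central fibre, the fixed locus of $\tau$ is automatically transverse to the fibres of $\mathcal C\to\Delta$ at $P$, and the conclusion follows at once.
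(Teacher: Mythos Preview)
Your argument is correct and follows essentially the same route as the paper: linearize the $G_P$-action near the smooth point $P$ of the smooth surface $\mathcal C$, use that the projection $t$ is $G_P$-invariant to see that $\tau$ acts as $(z,t)\mapsto(\xi z,t)$ with $\xi\neq 1$, and read off that $\{z=0\}$ supplies a fixed point on each nearby fibre. The paper compresses this into two lines by directly choosing coordinates $(z,t)$, whereas you phrase it via eigenvalues of $\tau$ on $T_P\mathcal C$ and the transversality of $\mathrm{Fix}(\tau)$ to the fibres; the content is the same.
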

\begin{proof}
 As in the proof of Lemma \ref{lemma1}, we can find local coordinates $(z,t)$ for $\mathcal{C}$ around $P$ such that $\tau$ acts as $(z,t)\mapsto(\xi z,t)$, where $\xi\in\mathbb{C}^*$ is a primitive $|\tau|$-th root of unity. So $\tau$ fixes $(0,t)\in\mathcal C_t$, for $t\neq 0$.
\end{proof}

\begin{lemma}\label{lemma4}
 If $P\in C$ is a node, then $G_P$ is either cyclic or dihedral.
\end{lemma}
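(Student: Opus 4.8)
The plan is to use Cartan's Lemma to linearize the action of $G_P$ near the node $P$, and then classify the finite groups that can act faithfully on the germ of a node while being compatible with a smoothing. A node has local equation $xy=0$ in $\mathbb{C}^2$, so its Zariski tangent space at $P$ is $T=\mathbb{C}^2$. By Cartan's Lemma there is a $G_P$-equivariant embedding of the germ $(C,P)$ into $(T,0)=(\mathbb{C}^2,0)$, so $G_P$ acts linearly on $\mathbb{C}^2$ preserving the union of the two coordinate axes $\{xy=0\}$. Each $g\in G_P$ therefore either preserves each of the two branches of the node or swaps them. This gives a homomorphism $G_P\rightarrow \mathbb{Z}_2$ recording whether a branch swap occurs; call its kernel $G_P^0$, the subgroup preserving both branches.

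First I would analyze $G_P^0$. An element preserving both axes acts as $(x,y)\mapsto(\alpha x,\beta y)$ for some $\alpha,\beta\in\mathbb{C}^*$, so $G_P^0$ embeds into the diagonal torus $\mathbb{C}^*\times\mathbb{C}^*$ and is thus a finite abelian group of rank at most $2$. Here I expect to invoke the smoothing hypothesis to cut this down further: since $\mathcal{C}\rightarrow\Delta$ smooths the node, the family is locally (after the equivariant embedding) of the form $xy=t$ or a unit times it, and $G_P^0$ must act on $\mathcal{C}$ preserving the fibration, hence act on the coordinate $t$; compatibility forces the product $\alpha\beta$ to be a root of unity tied to the action on $t$, and in fact — by the standard picture of an equivariant smoothing of a node, as in the Lemmas \ref{lemma1}--\ref{lemma3} above — one finds $\alpha\beta = 1$ (after absorbing the action on $t$), so that $g$ acts as $(x,y,t)\mapsto(\alpha x,\alpha^{-1}y,t)$. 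Consequently $G_P^0$ is \emph{cyclic}, being isomorphic to a finite subgroup of $\mathbb{C}^*$ via $g\mapsto\alpha$.

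Next I would handle the case where the branch-swap homomorphism is surjective. Pick $\sigma\in G_P$ swapping the two branches; then $\sigma$ has the form $(x,y)\mapsto(\gamma y,\delta x)$, and $\sigma$ together with $G_P^0$ generates $G_P$, with $[G_P:G_P^0]=2$. Conjugation by $\sigma$ acts on $G_P^0\cong\mathbb{Z}_n$ by inversion: if $g=(x,y)\mapsto(\alpha x,\alpha^{-1}y)$ then $\sigma g\sigma^{-1}=(x,y)\mapsto(\alpha^{-1}x,\alpha y)=g^{-1}$. Thus $G_P$ is an extension of $\mathbb{Z}_2$ by $\mathbb{Z}_n$ on which the quotient acts by inversion, i.e. $G_P$ is either dihedral $D_n$ or, if $\sigma^2$ is a generator of a cyclic group of order $2n$, a cyclic group. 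A short check of the two possibilities (comparing $\sigma^2\in G_P^0$) shows the group is cyclic when the extension splits in the ``rotation'' way and dihedral otherwise; in either event $G_P$ is cyclic or dihedral, as claimed. Combining this with the previous paragraph gives the lemma.

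The main obstacle I anticipate is the second paragraph: pinning down exactly what constraint the existence of the smoothing imposes on the diagonal action, i.e. proving that $\alpha\beta$ is forced to be trivial (up to the action on $t$) rather than merely a root of unity, which is what makes $G_P^0$ cyclic rather than an arbitrary finite abelian group of rank $2$. This requires carefully writing the equivariant local model of the smoothing of a node — presumably $xy=t^k$ for some $k$, with $G_P$ acting linearly on $(x,y)$ and by a character on $t$ — and checking the compatibility relations between the weights. Everything else (the branch-swap dichotomy, the inversion action, the extension analysis) is elementary group theory and linear algebra once the linearization from Cartan's Lemma is in hand.
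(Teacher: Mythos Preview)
Your overall strategy matches the paper's: linearize via Cartan's lemma, use the branch-swap homomorphism $G_P\to\mathbb{Z}_2$, show its kernel is cyclic via the smoothing, and analyse the resulting extension. The paper applies Cartan directly to the total space $\mathcal{C}$ (locally $xy=t^k$ in $\mathbb{C}^3$, with $G_P$ acting trivially on $t$), so that the smoothing constraint is imposed on \emph{every} element of $G_P$ from the start rather than only on the kernel.

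There is a genuine gap in your extension step. An extension of $\mathbb{Z}_2$ by $\mathbb{Z}_n$ with inversion action need not be cyclic or dihedral: for even $n\geq 4$ the dicyclic group of order $2n$ (e.g.\ the quaternion group $Q_8$ when $n=4$) is also such an extension, and it \emph{does} act faithfully on the node $xy=0$ --- take $G_P^0=\langle (x,y)\mapsto(ix,-iy)\rangle$ and $\sigma\colon(x,y)\mapsto(y,-x)$, so that $\sigma^2\colon(x,y)\mapsto(-x,-y)$ lies in $G_P^0$. What excludes this is precisely the smoothing constraint you invoked for $G_P^0$, applied now to $\sigma$: if $\sigma(x,y)=(\gamma y,\delta x)$ is to preserve the family $xy=t^k$ with trivial action on $t$, then $\sigma^*(xy)=\gamma\delta\cdot xy$ forces $\gamma\delta=1$, whence $\sigma^2=1$. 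Once every branch-swapping element is an involution the extension is automatically dihedral (or $\mathbb{Z}_2$ when $G_P^0$ is trivial). The paper encodes this by writing the anti-diagonal matrix as $(x,y)\mapsto(\eta y,\eta^{-1}x)$ from the outset; your ``short check of the two possibilities'' is where the argument currently fails, and the fix is exactly the step you already anticipate having to justify for $G_P^0$, applied once more to $\sigma$.
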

\begin{proof}
The germ of $\mathcal{C}$ around $P$ can be seen as a deformation of a node. We can find an embedding of the germ into $(\mathbb{C}^3,0)$ such that the equation of the germ is $xy-t^k=0,k\geq 1$. In fact, let
\[\begin{matrix}
\{xy-s=0\} & \rightarrow & \Delta\\
(x,y,s)   & \mapsto & s
\end{matrix}\]
be the semiuniversal family of a node. Then locally around $P$, $\mathcal{C}\rightarrow \Delta$ is just the pull-back by
\[
\begin{matrix}
 \Delta & \rightarrow & \Delta \\
  t     & \mapsto & s=t^k,
 \end{matrix}
\]
where $k\geq 1.$

By Cartan's lemma, we can assume the action of $G_P$ is given by
\[
\tau\colon (x,y,t)\mapsto (a_1 x+a_2 y, b_1 x+b_2 y, t).
\]
for any $\tau\in G_P$. Since $G_P$ acts on the central fibre $\mathcal{C}_0:xy=0$, we have
\[\left(
 \begin{matrix}
  a_1 & a_2 \\
  b_1 & b_2
 \end{matrix}\right)
 =\left(\begin{matrix}
   \xi & 0   \\
   0   & \xi^{-1}
  \end{matrix}\right)
\text{or} \left(\begin{matrix}
   0 &  \eta \\
   \eta^{-1} & 0
  \end{matrix} \right),
\]
where $\xi$ is a  primitive $|\tau|$-th root of unity and $\eta$ is some non-zero number. Let $\pi:G_P \rightarrow \mathbb{Z}_2$ be the determinant homomorphism: for any $\tau\in G_P$,
\[
\pi(\tau):=det(\tau)=
 \begin{cases}
  1,  & \text{if } \tau \text{ does not interchange the branches at } P,\\
  -1, & \text{if } \tau \text{ interchanges the branches at }P.
 \end{cases}
\]
Then the kernel $H $ of $\pi$ consists of $\tau\in G_P$ whose action is given by
$\left(\begin{matrix}
   \xi & 0   \\
   0   & \xi^{-1}
  \end{matrix}\right).$
Note that $H$ embeds into $\mathbb{C}^*$:
\[
 \begin{matrix}
  H &\rightarrow &\mathbb{C}^*\\
  \tau &\mapsto &\xi.
 \end{matrix}
\]
If $\text{im}(\pi)=\{1\}$, then $G_P=H$ is cyclic. If $\text{im}\pi=\mathbb{Z}_2$, then there exists $\tau\in G_P$ such that
\[
 \tau(x,y,t)=(\eta y,\eta^{-1} x, t),
\]
 and it is easy to see in this case that
\[
G_P=
 \begin{cases}
  \text{a dihedral group, }& \text{if  $|H|\geq 2$};\\
  \mathbb{Z}_2, &  \text{if  $|H|=1$}.
 \end{cases}
\]
\end{proof}

\begin{lemma}\label{lemma5}
 Let $P\in C$ be a node. Suppose $1\neq\tau\in G_P$: then $\tau$ fixes points on $\mathcal{C}_t$  for $t\neq 0$ if and only if $\pi(\tau)=-1$, where $\pi\colon G_P\rightarrow \mathbb{Z}_2$ is as in the proof of the previous lemma.
\end{lemma}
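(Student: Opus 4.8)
The plan is to reuse the explicit local model from the proof of Lemma~\ref{lemma4}. There we embedded the germ of $\mathcal C$ at the node $P$ into $(\mathbb C^3,0)$ as $\{xy-t^k=0\}$, with $k\geq 1$, in such a way that, by Cartan's lemma, every $\tau\in G_P$ acts linearly on $(x,y,t)$ and fixes the $t$-coordinate. Fix such a $\tau\neq 1$ and consider its action on the fibre $\mathcal C_t=\{xy=t^k\}$ for a small $t\neq 0$; note this fibre is smooth.

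First I would treat the case $\pi(\tau)=1$. Then by Lemma~\ref{lemma4} (or rather its proof) $\tau$ acts by $(x,y,t)\mapsto(\xi x,\xi^{-1}y,t)$ for a primitive $|\tau|$-th root of unity $\xi$. A point $(x_0,y_0)$ of $\mathcal C_t$ is fixed by $\tau$ iff $\xi x_0=x_0$ and $\xi^{-1}y_0=y_0$; since $\xi\neq 1$ this forces $x_0=y_0=0$, which is impossible on $\mathcal C_t$ because then $0=x_0y_0=t^k\neq 0$. Hence $\tau$ has no fixed point on $\mathcal C_t$ in this local chart, and since the only points of $C$ with nontrivial stabilizer are the finitely many fixed points (Lemma~\ref{lemma1}) and for $t\neq 0$ any fixed point of $\tau$ on $\mathcal C_t$ must specialize to one of these — this is the point needing a small argument — we conclude $\tau$ fixes no point of $\mathcal C_t$ at all. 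Conversely, if $\pi(\tau)=-1$ then, after the linear change of coordinates exhibited in Lemma~\ref{lemma4}, $\tau$ acts by $(x,y,t)\mapsto(\eta y,\eta^{-1}x,t)$. On $\mathcal C_t$ a fixed point satisfies $x_0=\eta y_0$ and $y_0=\eta^{-1}x_0$ (the second being a consequence of the first) together with $x_0y_0=t^k$; substituting gives $\eta y_0^2=t^k$, which has a nonzero solution $y_0$ for every $t\neq 0$. Thus $\tau$ does fix a point of $\mathcal C_t$.

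To make the "specialization" step precise, I would argue as follows: the locus $\mathrm{Fix}(\tau)\subset\mathcal C$ is closed and $G$-invariant, and $\mathcal C\rightarrow\Delta$ is proper, so the image of $\mathrm{Fix}(\tau)$ in $\Delta$ is closed; if it were all of $\Delta$ then by Lemma~\ref{lemma1} applied fibrewise (or directly, since $\mathrm{Fix}(\tau)\cap\mathcal C_0$ is finite) the fixed locus would be a curve mapping finitely to $\Delta$, hence a multisection, meeting $\mathcal C_0$ in a finite set of nodes and smooth points; near a smooth fixed point Lemma~\ref{lemma3}'s coordinates show the local fixed locus is exactly the section $\{z=0\}$, and near a node it is governed by the local computation above. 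So the global picture is: $\tau$ fixes a point of $\mathcal C_t$ for $t\neq 0$ iff some point of $\mathrm{Fix}(\tau)\cap\mathcal C_0$ lies on a branch of $\mathrm{Fix}(\tau)$ dominating $\Delta$, and the two local computations identify exactly which fixed points of $C$ do this: the smooth ones always do (Lemma~\ref{lemma3}), and a node $P$ does iff $\pi(\tau)=-1$.

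The main obstacle I anticipate is not the linear algebra — that is a two-line check in each case — but bookkeeping the passage from the local statement at $P$ to a statement about $\mathcal C_t$ globally, i.e.\ making sure that a fixed point of $\tau$ on $\mathcal C_t$ genuinely limits to a fixed point on $\mathcal C_0$ and cannot "escape." This is handled by properness of $\mathcal C\rightarrow\Delta$ together with the finiteness in Lemma~\ref{lemma1}, and once that is in place the lemma follows immediately from the two explicit computations. One should also note that the statement is really local at $P$: since the question "does $\tau$ fix a point of $\mathcal C_t$" only depends on the behaviour of $\mathrm{Fix}(\tau)$ near its intersection with $\mathcal C_0$, and $\pi(\tau)$ is defined purely in terms of the action at $P$, there is no tension between contributions from different nodes.
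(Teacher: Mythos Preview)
Your two local computations—for $\pi(\tau)=1$ and $\pi(\tau)=-1$—are exactly the paper's proof, and they are all that is required. The paper reads the lemma as a purely local statement at $P$ (``$\tau$ fixes a point of $\mathcal C_t$ \emph{near $P$}''), and its proof consists solely of those two calculations in the model $xy=t^k$.

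Your attempt to upgrade this to a global assertion is where the proposal goes astray. In the case $\pi(\tau)=1$ you conclude that ``$\tau$ fixes no point of $\mathcal C_t$ at all,'' yet a few lines later you correctly observe that any smooth fixed point of $\tau$ on $C$ propagates to a fixed point on nearby fibres by Lemma~\ref{lemma3}. These two claims are incompatible: nothing prevents a $\tau$ with $\pi(\tau)=1$ at the node $P$ from also fixing a smooth point of $C$ elsewhere (or another node at which it \emph{does} swap branches), and then $\tau$ certainly has fixed points on $\mathcal C_t$ for $t\neq 0$. So the global ``only if'' direction is simply false in general, and the lemma must be read locally—as you yourself half-acknowledge in your final sentence, though the remark that ``there is no tension between contributions from different nodes'' overlooks exactly this phenomenon. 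Drop the specialization/properness paragraph entirely and your proof coincides with the paper's.
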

\begin{proof}
We adopt the notation in  Lemma \ref{lemma4}, so the germ of $\mathcal{C}$ around $P$ is defined by $xy-t^k=0,k\geq 1$ and the action of $G_P$ is linear.

Let $1\neq\tau\in G_P$. If $\pi(\tau)=1$, then $\tau(x,y,t)=(\xi x,\xi^{-1}y,t)$,
where $\xi\in\mathbb{C}^*$ is a  primitive $|\tau|$-th root of unity. Suppose $(x,y,t)\in\mathcal{C}$ is a fixed point of $\tau$. Then
\[
 \tau(x,y,t)=(\xi x,\xi^{-1}y,t)=(x,y,t)\Rightarrow x=y=0,
\]
and $xy=t^k$ implies $t=0$. So $\tau$ does not fix any point on $\mathcal{C}_t$ for $t\neq 0$.

On the other hand, if $\pi(\tau)=-1$, then $\tau(x,y,t)=(\eta y,\eta^{-1}x,t)$ with $\eta \in\mathbb{C}^*$. So $\tau(x,y,t)=(x,y,t)$  if and only if $\eta y=x$. Taking the equation $xy=t^k$ into consideration, $\tau$ fixes 2 points: $(\eta\sqrt{\frac{t^k}{\eta}},t)$ and $(-\eta\sqrt{\frac{t^k}{\eta}},t)$ on $\mathcal{C}_t$, for $t\neq0$.
\end{proof}

Now we can state our main theorem in this section:
\begin{theorem}\label{smoothingcurve}
A pair (C,G) admits a smoothing if and only if for any node $P\in C$, we can find local (analytic)  embedding of $C\colon (xy=0)\subset\mathbb{C}^2$ such that, for any $\tau\in G_P$, the action of $\tau$ is given by either
\begin{enumerate}
 \item [(i)] {$(x,y)\mapsto(\xi x,\xi^{-1}y)$} where $\xi$ is a $|\tau|$-th root of unity; or
 \item [(ii)]{$(x,y)\mapsto(\eta y,\eta^{-1}x)$} where $\eta\in\mathbb{C}^*$ is a nonzero number.
\end{enumerate}
\end{theorem}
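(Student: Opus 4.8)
The assertion is an equivalence, and I would handle the two implications separately; the forward one is immediate from the work already done, while the reverse one carries the real content. For $(\Rightarrow)$, suppose $\mathcal{C}\to\Delta$ is a smoothing of $(C,G)$ and $P\in C$ is a node. Running the argument in the proof of Lemma \ref{lemma4}, the germ of $\mathcal{C}$ at $P$ can be written as $\{xy-t^{k}=0\}\subset(\mathbb{C}^{3},0)$ with $t$ pulled back from $\Delta$, and by Cartan's lemma the $G_P$-action is linear in $(x,y,t)$, fixes $t$, and preserves the central fibre $\{xy=0\}$; hence each $\tau\in G_P$ acts on $(x,y)$ by $(\xi x,\xi^{-1}y)$ or by $(\eta y,\eta^{-1}x)$. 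Restricting to $t=0$ exhibits $C$ near $P$ as $(xy=0)\subset\mathbb{C}^{2}$ with precisely the asserted action, and in case (i) the identity $\tau^{|\tau|}=1$ forces $\xi^{|\tau|}=1$. So nothing new is needed there.

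For $(\Leftarrow)$ I start from the local linear models at the nodes furnished by the hypothesis and build a global $G$-equivariant smoothing. The key observation is that, in \emph{both} cases, $G_P$ acts trivially on the one-dimensional space $T^1_P$ of first-order deformations of the node --- equivalently on the smoothing parameter $s$ in the miniversal family $\{xy=s\}$ --- because $s=xy$ is carried to $(\xi x)(\xi^{-1}y)=s$ in case (i) and to $(\eta y)(\eta^{-1}x)=s$ in case (ii). Therefore the skyscraper sheaf $\mathcal{T}^1_C=\bigoplus_{P}\mathbb{C}_P$ (sum over the nodes) is, as a $G$-equivariant sheaf, the permutation sheaf of the $G$-action on the set of nodes with all stabilizer actions trivial, so $H^0(\mathcal{T}^1_C)^G=\bigoplus_{O}\mathbb{C}$ with one summand per $G$-orbit $O$ of nodes. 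Since $C$ is a projective curve, $H^2(\mathcal{T}_C)=0$; applying $(-)^G$ (exact, since $G$ is finite and we work over $\mathbb{C}$) to the functorial local-to-global sequence $0\to H^1(\mathcal{T}_C)\to T^1_C\to H^0(\mathcal{T}^1_C)\to H^2(\mathcal{T}_C)=0$ shows $(T^1_C)^G$ surjects onto $H^0(\mathcal{T}^1_C)^G$. I then choose $\theta\in(T^1_C)^G$ mapping to $(1,\dots,1)$, i.e.\ a $G$-equivariant first-order deformation of $C$ that smooths every node.

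It remains to integrate $\theta$ to a genuine $G$-equivariant one-parameter family. As the singularities of $C$ are nodes, hence hypersurface (in particular complete intersection) singularities, the local $T^2$ vanish; with $H^2(\mathcal{T}_C)=0$ and $H^1(\mathcal{T}^1_C)=0$ this gives $T^2_C=0$, and likewise for the $G$-invariant part, so the $G$-equivariant deformation functor of $C$ is unobstructed with tangent space $(T^1_C)^G$. Hence $\theta$ extends to a $G$-equivariant family $\mathcal{C}\to\Delta$ with $\mathcal{C}_0\cong C$ after shrinking $\Delta$ (alternatively one can glue the local models $\{xy=t\}$, transported $G$-equivariantly across each orbit, into the trivial family over $C$ minus its nodes). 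Near each former node the family is, in suitable coordinates, $\{xy=t\}$, since the smoothing component of $\theta$ there is nonzero, so those points become smooth on $\mathcal{C}_t$; away from the nodes the family is a deformation of a smooth curve, hence smooth. Thus $\mathcal{C}_t$ is a smooth curve for all small $t\neq0$, its arithmetic genus equals that of $C$ by flatness, so $\mathcal{C}\to\Delta$ is a smoothing of $(C,G)$ in the sense of the definition --- note that it is irrelevant, and generally false, that the induced $G$-action on $\mathcal{C}_t$ be free.

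The step I expect to need the most care is exactly the passage from the first-order equivariant deformation $\theta$ to an honest equivariant family: one must invoke (or briefly set up) the equivariant deformation theory of the nodal curve $C$ --- unobstructedness and tangent space $(T^1_C)^G$ --- and check that each smoothed node acquires the expected local model $\{xy=t\}$, so that the nearby fibres are genuinely smooth and not merely smoothed to first order. Everything else is bookkeeping with the explicit linear local models, most of which already appears in Lemmas \ref{lemma4} and \ref{lemma5}.
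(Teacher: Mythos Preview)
Your proof is correct and follows essentially the same route as the paper: the forward direction is read off from the analysis in Lemma~\ref{lemma4}, and the reverse direction hinges on the observation that $G_P$ acts trivially on the one-dimensional $T^1$ at each node (because $s=xy$ is fixed in both cases), followed by the $G$-invariant local-to-global sequence to produce a $G$-invariant first-order class that smooths every node. The paper's proof ends with the bare assertion ``Then $\lambda$ gives a smoothing of $(C,G)$'', implicitly relying on the unobstructedness of deformations of stable curves (cf.\ \cite{DM}); you make this step explicit by noting $T^2_C=0$ and hence that the equivariant functor is unobstructed, which is a useful clarification rather than a different argument.
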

\begin{proof}
 The \textquotedblleft only if\textquotedblright part is shown in the proof of Lemma \ref{lemma4}.

For the \textquotedblleft if \textquotedblright part, we divide the proof into two steps.

\textbf{Step 1:} The germ $P\in C$ has a local $G$-equivariant smoothing. More precisely, let $U\subset C$ be a neighborhood around $P$ defined by $xy=0\subset\mathbb{C}^2$ as in the hypothesis, we will show that the pair $(U,G_P)$ is $G_P$-smoothable. In fact we can consider the family $\mathcal{U}:(xy-s=0)\subset \mathbb{C}^2\times\Delta\rightarrow\Delta$ with $s\in\Delta$ as the parameter. For any $\tau\in G_P$,
\[
 \tau(x,y)=(\xi x,\xi^{-1}y) \text{ or } (\eta y,\eta^{-1}x),
\]
 and it is easily seen that the action of $G_P$ on $U$ extends to the family $\mathcal{U}\rightarrow \Delta$.

Note that $\mathcal{U}\rightarrow \Delta$ is the semiuniversal deformation of the node $P\in U$ and the tangent space of the base space at 0 is $\mathcal{E}xt^1_{\mathcal{O}_U}(\Omega_U,\mathcal{O}_U)\cong\mathbb{C}$.
The fact that $(U,G_P)$ is smoothable means exactly that $\mathcal{E}xt^1_{\mathcal{O}_U}(\Omega_U,\mathcal{O}_U)$  is $G_P$-invariant.

\textbf{Step 2:} We will use the local-to-global exact sequence
\[
  0 \rightarrow H^1(C,\mathcal T_C)\rightarrow \text{Ext}^1_{\mathcal{O}_C}(\Omega_C,\mathcal{O}_C) \xrightarrow{\pi} H^0(C,\mathcal{E}xt^1_{\mathcal{O}_C}(\Omega_C,\mathcal{O}_C))\rightarrow 0
\]
to prove that local smoothings of nodes with stabilizers lift to  a smoothing of $(C,G)$.
To do this, first note that
\begin{equation}\label{directsum}
 H^0(C,\mathcal{E}xt^1_{\mathcal{O}_C}(\Omega_C,\mathcal{O}_C))=\bigoplus_{P\text{ node}}\text{Ext}^1_{\mathcal{O}_{C,P}}(\Omega_{C,P},\mathcal{O}_{C,P}),
\end{equation}
where, for any coherent sheaf $\mathcal{F}$ on $C$, $\mathcal{F}_P$ denotes the stalk of $\mathcal{F}$ at $P$. For any $\tau\in G$, $\tau$ acts on $H^0(C,\mathcal{E}xt^1_{\mathcal{O}_C}(\Omega_C,\mathcal{O}_C))$ and maps
the $\text{Ext}^1_{\mathcal{O}_{C,P}}(\Omega_{C,P},\mathcal{O}_{C,P})$ summand isomorphically to the $\text{Ext}^1_{\mathcal{O}_{C,\tau(P)}}(\Omega_{C,\tau(P)},\mathcal{O}_{C,\tau(P)})$ summand.

Let $n(P):=|G/G_P|$ and $\tau_1,\dots,\tau_{n(P)}\in G$  representatives of elements of $G/G_P$. Then $\tau_1(P),\dots,\tau_{n(P)}(P)$ is the orbit of $P$ under the action of $G$. And $G$ acts on the vector space
\[
 V_P:=\bigoplus_{j=1}^{n(P)}\text{Ext}^1_{\mathcal{O}_{C,\tau_j(P)}}(\Omega_{C,\tau_j(P)},\mathcal{O}_{C,\tau_j(P)}).
\]
The invariant subspace $V_P^G$ is 1-dimensional, spanned by
\[
 (\tau_1(\sigma),\dots,\tau_{n(P)}(\sigma)),
\]
where $\sigma$ is an element spanning $\text{Ext}^1_{\mathcal{O}_{C,P}}(\Omega_{C,P},\mathcal{O}_{C,P})\cong\mathbb{C}$. In view of (\ref{directsum}), the dimension of $H^0(C,\mathcal{E}xt^1_{\mathcal{O}_C}(\Omega^1_C,\mathcal{O}_C))$ is exactly the number of node orbits under the action of $G$. Taking the $G$-invariants of the local-to-global sequence, we get
\[
  0 \rightarrow H^1(C,\mathcal T_C)^G \rightarrow \text{Ext}^1_{\mathcal{O}_C}(\Omega_C,\mathcal{O}_C)^G \xrightarrow{\pi} H^0(C,\mathcal{E}xt^1_{\mathcal{O}_C}(\Omega_C,\mathcal{O}_C))^G\rightarrow 0.
\]
In particular, there exists $\lambda\in\text{Ext}^1_{\mathcal{O}_C}(\Omega_C,\mathcal{O}_C)^G$ such that the $\pi(\lambda)$'s $P$-summand is nonzero for any node $P \in C$.
Then $\lambda$ gives a smoothing of $(C,G)$.
\end{proof}

\subsection{Singularities of degenerations of surfaces isogenous to a product of unmixed type}\label{sectionsmoothingu}
We study smoothings of products of two stable curves with a group action in a similar way as $G$-equivariant smoothings of curves in Section \ref{sectionsmoothingcurve}.

We treat the unmixed case first.
\begin{proposition}[Criterion for free smoothings in the unmixed case]\label{smoothingu}
Let $C, D$ be two stable curves and let $G$ be a finite group acting on $C$ and $D$.
Let $G$ act on $C \times D$ diagonally. Then the pair $(C\times D,G)$ admits a free smoothing if and only if for any $(P,Q)\in C\times D$, we have one of the following:
\begin{enumerate}
 \item[$(U_0)$]{if both $P,Q$ are smooth points on $C,D$ respectively, then the stabilizer $G_{(P,Q)}=\{1\}$.}
 \item[$(U_1)$]{if one of $P,Q$, say $P$, is a node and the other is a smooth point, then $G_{(P,Q)}=\langle\tau\rangle$ is cyclic, and we can find a local embedding of $C:(xy=0)\subset \mathbb{C}^2$ as well as a local coordinate $z$ of $D$ such that $\tau (x,y,z)=(\xi  x,\xi^{-1}y,\xi^qz)$, where $\xi$ is a primitive root of unity of order $|\tau|$ and $(q,|\tau|)=1$}.
 \item[$(U_2)$]{if both $P,Q$ are nodes of respective curves, then $G_{(P,Q)}=\langle\tau\rangle$ is cyclic and $\tau$ interchanges the branches of at most one of $C$ and $D$. In this case, we have one of the following}
\begin{enumerate}
\item[$(U_{2a})$]{$G_{(P,Q)}=\{1\}$.}
\item[$(U_{2b})$]{if $\tau$ does interchange the branches of $C$ or $D$, say $C$, then the order of $\tau$ is 2 and we can choose local embeddings $C:(xy=0)\subset \mathbb C^2$ and $D:(zw=0)\subset\mathbb C^2$ such that $\tau(x,y,z,w)=(y,x,-z,-w)$}.
\item[$(U_{2c})$]{if $\tau\neq 1$ and it does not interchange any branches of $C,D$, then we can choose local embeddings $C:(xy=0)\subset \mathbb C^2$ and $D:(zw=0)\subset\mathbb C^2$ such that $\tau(x,y,z,w)=(\xi x,\xi^{-1}y,\xi^qz,\xi^{-q}w)$, where $\xi$ is a primitive root of unity of order $|\tau|$ and $(q,|\tau|)=1$.}
\end{enumerate}
\end{enumerate}
\end{proposition}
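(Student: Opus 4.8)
The plan is to reduce the statement to the one-variable theory of Section~\ref{sectionsmoothingcurve} applied separately to the two factors. For the \emph{only if} direction, suppose $\mathcal{C}\times_\Delta\mathcal{D}\to\Delta$ is a free smoothing; since the general fibre is a product of smooth curves of genus $\geq 2$, after shrinking $\Delta$ the $G$-action respects the product decomposition, so $\mathcal{C}\to\Delta$ and $\mathcal{D}\to\Delta$ are $G$-equivariant smoothings of $C$ and $D$ and Lemmas~\ref{lemma1}--\ref{lemma5} apply to each. Fix $(P,Q)$ and write $G_{(P,Q)}=G_P\cap G_Q$. The mechanism is that freeness on $\mathcal{C}_t\times\mathcal{D}_t$ forbids any $1\neq\tau\in G_{(P,Q)}$ from fixing points on both $\mathcal{C}_t$ and $\mathcal{D}_t$; and by Lemmas~\ref{lemma3} and~\ref{lemma5}, $\tau$ fixes points on $\mathcal{C}_t$ exactly when it fixes a smooth point of $C$ or interchanges the branches at a node of $C$ (likewise for $D$). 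This immediately gives: in $(U_0)$, any $\tau\neq1$ fixes points on both factors (Lemma~\ref{lemma3}), so $G_{(P,Q)}=\{1\}$; in $(U_1)$, $G_{(P,Q)}\subseteq G_Q$ is cyclic by Lemma~\ref{lemma2} and its generator cannot interchange the branches at $P$ (else Lemmas~\ref{lemma5},~\ref{lemma3} produce a fixed point on the product); in $(U_2)$, the two branch-swapping homomorphisms $\pi_C,\pi_D\colon G_{(P,Q)}\to\mathbb{Z}_2$ of Lemma~\ref{lemma4} satisfy $(-1,-1)\notin\mathrm{im}(\pi_C\times\pi_D)$ (Lemma~\ref{lemma5} on both factors plus freeness), hence one of $\pi_C,\pi_D$ is trivial, say $\pi_D$, and then $G_{(P,Q)}=\ker\pi_D$ embeds into $\mathbb{C}^*$ via its action on the node $Q$, so it is cyclic. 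Thus in all cases $G_{(P,Q)}=\langle\tau\rangle$ is cyclic and $\tau$ swaps the branches of at most one of the two curves.

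To finish the \emph{only if} direction, I would extract the normal forms. Cartan's Lemma linearises the $G_{(P,Q)}$-action on the germ of $C\times D$ at $(P,Q)$, and the finite-order linear automorphisms preserving a node $xy=0$ are precisely $(x,y)\mapsto(\xi x,\xi^{-1}y)$ and $(x,y)\mapsto(\eta y,\eta^{-1}x)$ --- the computation already done in the proof of Lemma~\ref{lemma4}. The exponents are then forced by the principle that no nontrivial power of $\tau$ may act as the identity on an irreducible component of $C$ or of $D$ (by Lemma~\ref{lemma1} such a power is the identity): this makes $\xi$ of order $|\tau|$, hence primitive, and forces $(q,|\tau|)=1$ in $(U_1)$ and $(U_{2c})$. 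In $(U_{2b})$, if $\tau$ swaps the branches at $P$ then $\tau^2$ acts trivially on the germ of $C$, so $\tau^2=1$; hence $\tau$ acts on the node $Q$ diagonally by a square root of unity, and not being the identity there it must be $(z,w)\mapsto(-z,-w)$; a rescaling of $y$ finally normalises the $C$-action to the swap, yielding $\tau(x,y,z,w)=(y,x,-z,-w)$.

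For the \emph{if} direction I would construct the smoothing directly. By Theorem~\ref{smoothingcurve} --- whose node-by-node hypothesis is a consequence of $(U_1)$, $(U_2)$ and the faithfulness of the actions --- smooth $(C,G)$ and $(D,G)$ separately, choosing the smoothings so that every node of each curve is actually smoothed; then $\mathcal{C}\times_\Delta\mathcal{D}\to\Delta$ obviously satisfies (i)--(iii) of a free smoothing, and the point is to check (iv). Using Lemmas~\ref{lemma3}, \ref{lemma5} together with a properness argument on the surface $\mathcal{C}$ --- no component of $\mathrm{Fix}(g)$ lies inside $\mathcal{C}_0$ by Lemma~\ref{lemma1} --- one gets, for $0\neq t$ small, that $g\neq1$ fixes a point of $\mathcal{C}_t$ iff $g$ fixes a smooth point of $C$ or swaps the branches at a node of $C$, and similarly for $D$. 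If some $g\neq1$ did not act freely on $\mathcal{C}_t\times\mathcal{D}_t$, it would fix a point on $\mathcal{C}_t$ \emph{and} on $\mathcal{D}_t$, producing a point $(P,Q)$ with $g\in G_{(P,Q)}\setminus\{1\}$ where on each side the relevant point is a smooth fixed point or a node whose branches $g$ interchanges. Every such combination is excluded by the Proposition: two smooth points contradict $(U_0)$; one node and one smooth point contradict the diagonal form in $(U_1)$ (a power of the generator cannot swap branches); two nodes with $g$ swapping branches on both sides contradict the ``at most one'' clause of $(U_2)$. Hence $G$ acts freely on the general fibre, and we have a free smoothing.

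The main obstacle is the \emph{if} direction. First, one must genuinely verify that $(U_0)$--$(U_2)$ imply the hypothesis of Theorem~\ref{smoothingcurve} for each factor --- this is where the faithfulness of the actions is used to constrain all of $G_P$ at a node, not merely the stabilisers $G_{(P,Q)}$. Second, one must control the fixed loci on the \emph{nearby} smooth fibres $\mathcal{C}_t$, $\mathcal{D}_t$, not just on the special fibres, in order to obtain the sharp dichotomy ``$g$ has a fixed point on $\mathcal{C}_t$, or on $\mathcal{D}_t$, but not on both'' on which the case analysis rests. The \emph{only if} direction, by contrast, is essentially bookkeeping once Cartan's Lemma has linearised the stabilisers and Lemmas~\ref{lemma1}--\ref{lemma5} are applied factor by factor.
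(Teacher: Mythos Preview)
Your proof is correct and follows essentially the same route as the paper: the ``only if'' direction is a case-by-case application of Lemmas~\ref{lemma1}--\ref{lemma5} after linearising via Cartan, and the ``if'' direction smooths each factor through Theorem~\ref{smoothingcurve} and then checks freeness on the general fibre by Lemma~\ref{lemma5}. Your treatment of $(U_2)$ via the product homomorphism $(\pi_C,\pi_D)$ and the observation that $(-1,-1)\notin\operatorname{im}(\pi_C\times\pi_D)$ forces one of $\pi_C,\pi_D$ to be trivial is just a clean repackaging of the paper's argument, which instead produces a branch-swapping $\tau=\tau_1\tau_2$ on both factors to reach the same contradiction.

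The one substantive point you add is the flag in your final paragraph: the paper asserts in a single line that $C$ and $D$ admit $G$-equivariant smoothings ``by Theorem~\ref{smoothingcurve}'', whereas the hypotheses $(U_0)$--$(U_2)$ constrain only the stabilisers $G_{(P,Q)}=G_P\cap G_Q$, not the full $G_P$ at a node of $C$ (an element $\tau\in G_P$ acting freely on $D$ escapes all the conditions). You are right that this needs an extra argument; in the paper's intended setting---where $X$ is already known to be a stable degeneration, so $(C,G)$ and $(D,G)$ arise as central fibres of equivariant smoothings---the hypothesis of Theorem~\ref{smoothingcurve} is automatic, but as a free-standing ``if and only if'' it deserves the care you give it. Your more explicit control of fixed loci on nearby fibres $\mathcal{C}_t,\mathcal{D}_t$ likewise just spells out what the paper's one-sentence freeness check takes for granted.
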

\begin{proof}
For the $``\Rightarrow"$ direction, suppose $\mathcal{Z}=\mathcal{C}\times_{\Delta}\mathcal{D}\rightarrow \Delta$ is a free smoothing of $(C\times D, G)$. Let $(P,Q)$ be any point on $C\times D$. Note that $G_{(P,Q)}=G_P\cap G_Q$. We divide our further discussion into 3 cases:

\textbf{Case \text{$(U_0)$}}: $P,Q$ are both smooth points on $C,D$ respectively.

We will show in this case that $G_{(P,Q)}=\{1\}$. Suppose $1\neq \tau\in G_{(P,Q)}$: then $P,Q$ are both fixed points of $\tau$. Since $P,Q$ are both smooth points on $C,D$ respectively, $\tau$ fixes points on $\mathcal{C}_t, \mathcal{D}_t$ for $t\neq 0$ by Lemma \ref{lemma3}. So $\tau$ fixes points on $\mathcal{C}_t\times\mathcal{D}_t$ for $t\neq 0$, which contradicts the assumption that $G$ acts freely on $\mathcal{C}_t\times\mathcal{D}_t$ for $t\neq 0$.

\textbf{Case \text{$(U_1)$}}: One of $P,Q$, say $P$, is a node and the other is a smooth point.

Suppose $G_{(P,Q)}\neq\{1\}$. By Lemma \ref{lemma2}, $G_Q$ is cyclic and hence its subgroup $G_{(P,Q)}$ is also cyclic. Let $G_{(P,Q)}=\langle\tau\rangle,\tau\neq 1.$ By Lemma \ref{lemma3}, $\tau$ fixes points of $\mathcal{D}_t$ for $t\neq 0$. Since $G$ acts freely on $\mathcal{C}_t\times\mathcal{D}_t$ for $t\neq 0$, $\tau$ does not fix any point of $\mathcal{C}_t,t\neq 0.$ By Lemma \ref{lemma5}, $\tau$ does not interchange the branches of $C$ around $P$. Hence there are a local embedding $C:(xy=0)\subset\mathbb{C}^2$ around $P$  and a local coordinate $z$ of $D$ around $Q$ such that the action of $\tau$ on $C\times D$ is
\[
(x,y,z)\mapsto (\xi x,\xi^{-1}y,\xi^q z)
\]
where $\xi$ is a primitive root of unity of order $|\tau|$ and $(q,|\tau|)=1$.

\textbf{Case \text{$(U_2)$}}: $P,Q$ are both nodes on $C,D$.

Suppose $G_{(P,Q)}\neq\{1\}.$ By Lemma \ref{lemma4}, we have that $G_{P}$ and $G_{Q}$ are either cyclic or dihedral. This implies that $G_{(P,Q)}=G_{P}\cap G_{Q}$ is either cyclic or dihedral. Suppose $G_{(P,Q)}$ is dihedral. Then $G_{P}$ and $G_{Q}$ are both dihedral and, by the proof of Lemma \ref{lemma4}, there is $\tau_1\in G_{(P,Q)}$ (resp. $\tau_2\in G_{(P,Q)}$) such that $\tau_1$ (resp. $\tau_2$) interchanges the branches of $C$ at $P$ (resp. the branches of $D$ at $Q$). By Lemma \ref{lemma5}, $\tau_1$ (resp. $\tau_2$) fixes points of $\mathcal{C}_t$ (resp. $\mathcal{D}_t$) for $t\neq 0$. Since neither $\tau_1$ nor $\tau_2$ fixes points on $\mathcal{C}_t\times \mathcal{D}_t$, $\tau_1$ (resp. $\tau_2$) does not fix points on $\mathcal{D}_t$ (resp. $\mathcal{C}_t$). Again by Lemma \ref{lemma5}, $\tau_1$ (resp. $\tau_2$) does not interchange the branches of $D$ at $Q$ (resp. the branches of $C$ at $P$). Now set $\tau:=\tau_1\tau_2$, then $\tau$ interchanges the branches of $C$ as well as those of $D$. This implies that $\tau$ fixes points on $\mathcal{C}_t\times \mathcal{D}_t$ for $t\neq 0$, a contradiction.

So $G_{(P,Q)}$ is cyclic and we can assume that $G_{(P,Q)}=\langle\tau\rangle$. Since $\tau$ does not fix any point on $\mathcal{C}_t\times\mathcal{D}_t$ for $t\neq 0$, $\tau$ interchanges the branches of at most one of $C$ and $D$. If $\tau$ does interchange the branches of one of $C$ and $D$, say $C$, then the order of $\tau$ is 2 (Lemma \ref{lemma4}) and we can choose local embeddings $C\colon (xy=0)\subset \mathbb C^2$ and $D\colon (zw=0)\subset \mathbb C^2$ such that $\tau$ acts as
\[
 (x,y,z,w)\rightarrow(y,x,-z,-w).
\]

If $\tau$ does not interchange any branches of $C,D$, then we can choose local embeddings $C:(xy=0)\subset \mathbb{C}^2,D:(zw=0)\subset \mathbb{C}^2$ such that
\[
\tau(x,y,z,w)=(\xi x,\xi^{-1}y,\xi^q z,\xi^{-q}w),
\]
 where $\xi$ is a primitive root of unity of order $|\tau|$ and $(q,|\tau|)=1.$

For the $``\Leftarrow"$ direction, note that $C$ and $D$ admit $G$-equivariant smoothings $\mathcal{C}\rightarrow\Delta,\mathcal{D}\rightarrow \Delta$ by Theorem \ref{smoothingcurve}. In each of the cases $(U_0)$, $(U_1)$, $(U_2)$, any non-trivial element $\tau^k\in G_{(P,Q)}=\langle\tau\rangle$ interchanges at most the local branches of one of the factors. This guarantees that $\tau^k$ acts locally freely on at least one of the factors of $\mathcal{C}_t\times\mathcal{D}_t$ for $t\neq 0$ (Lemma \ref{lemma5}). So $\mathcal{Z}:=\mathcal{C}\times_{\Delta}\mathcal{D}\rightarrow \Delta$ is a required free smoothing.
\end{proof}
\begin{remark}
 In the unmixed case, $G_{(P,Q)}$ is always cyclic.
\end{remark}

According to Theorem \ref{vO} and the discussion thereafter, a surface $X$ is a stable degeneration of surfaces isogenous to a product of unmixed type if and only if $X=(C\times D)/G$, where $C,D$ are stable curves and $G$ is a finite group acting  on $C,D$ and acting diagonally on $C\times D$ such that $(C\times D,G)$ admits a free smoothing. So we have
\begin{corollary}\label{singularityu}
 The possible singularities of a stable degeneration $X$ of surfaces isogenous to a product of unmixed type are as follows:
\begin{enumerate}
 \item [$(U_{1a})$]{Double normal crossing singularities: $(xy=0) \subset\mathbb{C}^3$}. These are the general singularities of $X$.
 \item [$(U_{1b})$]{Quotients of the above singularity under the group action:
\[
 (x,y,z)\mapsto(\xi x,\xi^{-1}y,\xi^q z),
\]
where $\xi$ is a primitive $n$-th root of unity, $(q,n)=1$. In this case, the index of the singularity is $n$ and the canonical covering is the singularity $(U_{1a})$.}
\item [$(U_{2a})$]{The degenerate cusp: $(xy=0,zw=0)\subset\mathbb{C}^4$}.
\item [$(U_{2b})$]{A $\mathbb{Z}_2$-quotient of the degenerate cusp in $(U_{2a})$ under the group action:
\[
 (x,y,z,w)\mapsto(y,x,-z,-w).
\]
In this case, the index of the singularity is $2$ and the canonical covering is the degenerate cusp $(U_{2a})$.}
\item [$(U_{2c})$]{Other quotients of the degenerate cusp $(U_{2a})$ under the group action:
\[
 (x,y,z,w)\mapsto(\xi x,\xi^{-1} y,\xi^q z,\xi^{-q}w),
\]
where $\xi$ is a primitive $n$-th   root of unity, $(q,n)=1$. In this case, the singularity is still a (Gorenstein) degenerate cusp.}
\end{enumerate}
\end{corollary}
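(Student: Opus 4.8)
The plan is to translate the local analytic description of the $G$-action given in Proposition \ref{smoothingu} into the five normal forms of the statement, the point being that the germ of $X=(C\times D)/G$ at the image of a point $(P,Q)$ is the quotient of the germ of $C\times D$ at $(P,Q)$ by the stabilizer $G_{(P,Q)}$. First I would note that, since the free smoothing restricts to $G$-equivariant smoothings of $C$ and of $D$, each of $C,D$ has only finitely many points with non-trivial stabilizer (Lemma \ref{lemma1}), and hence so does $C\times D$ (the fixed locus of each $\tau$ is a product of two finite sets). Therefore I may replace the germ at $(P,Q)$ by a $G_{(P,Q)}$-invariant analytic neighbourhood disjoint from the rest of its orbit and from the remaining fixed points; then $X$ near the image of $(P,Q)$ is exactly this neighbourhood modulo $G_{(P,Q)}$, and Proposition \ref{smoothingu} tells me precisely what this neighbourhood and this action are. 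The corollary then amounts to exhausting the cases $(U_0),(U_1),(U_2)$ of that proposition.

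Next I would run through these cases. In case $(U_0)$ the stabilizer is trivial and the neighbourhood is smooth, so no singularity arises. In case $(U_1)$ the neighbourhood is $(xy=0)\subset\mathbb{C}^2$ times a disc, i.e. $(xy=0)\subset\mathbb{C}^3$; if $G_{(P,Q)}=\{1\}$ this is $(U_{1a})$, and — since the set of points of $C\times D$ with non-trivial stabilizer is finite while the double locus of $X$ is a curve — this is the behaviour at the generic point of every branch of the double locus, whence the phrase ``general singularities''; if $G_{(P,Q)}=\langle\tau\rangle\neq\{1\}$ it is the quotient $(U_{1b})$. In case $(U_2)$ the neighbourhood is the complete intersection $(xy=0,\,zw=0)\subset\mathbb{C}^4$, a Gorenstein degenerate cusp: when $G_{(P,Q)}=\{1\}$ this is $(U_{2a})$, when $G_{(P,Q)}=\mathbb{Z}_2$ interchanges one pair of branches it is $(U_{2b})$, and otherwise $(U_{2c})$.

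It remains to verify the claims about the index and the canonical covering in $(U_{1b}),(U_{2b}),(U_{2c})$. For this I would compute the character by which the cyclic generator $\tau$ (of order $n$) acts on the Poincar\'e residue generator $\eta$ of the (trivial) dualizing sheaf of the ambient complete intersection. For $(U_{1b})$, $\tau^\ast\eta=\xi^q\eta$; since $(q,n)=1$ this character has order $n$, so the index is $n$ and the index-one cover — which by construction trivializes $\omega_X$ — is the ambient $(xy=0)\subset\mathbb{C}^3$, i.e. $(U_{1a})$. For $(U_{2b})$, transposing $dx$ and $dy$ together with the two sign changes on $z,w$ combine to give $\tau^\ast\eta=-\eta$, so the index is $2$ and the cover is $(U_{2a})$. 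For $(U_{2c})$, the four eigenvalues multiply to $\xi\cdot\xi^{-1}\cdot\xi^{q}\cdot\xi^{-q}=1$, so $\tau^\ast\eta=\eta$ and the quotient is Gorenstein.

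Finally, to conclude that the Gorenstein singularity in $(U_{2c})$ is again a degenerate cusp, I would use that $X$ is slc (it is the quotient of the stable surface $C\times D$ by a finite group with finite fixed locus, as recalled before Theorem \ref{vO}) and that its singular locus, being the image of the $\tau$-invariant double locus of $(xy=0,zw=0)$, is one-dimensional, so the singularity is non-normal; then the Koll\'ar--Shepherd-Barron classification of Gorenstein slc surface singularities identifies a non-normal one that is neither a normal crossing nor a pinch point as a degenerate cusp. I expect this last identification — and, if one wants the explicit equations, the recognition of $(U_{2c})$ as a cycle of planes with the inherited cyclic quotient singularities on the components — to be the only step that is not routine bookkeeping; everything else is reading off Proposition \ref{smoothingu} and the residue computation above.
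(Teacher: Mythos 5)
Your proposal is correct and follows essentially the same route as the paper, which states the corollary as an immediate consequence of Proposition \ref{smoothingu} together with the characterization of stable degenerations given after Theorem \ref{vO}: one simply reads off the local quotient germs of $X=(C\times D)/G$ from the normal forms of the stabilizer actions. Your additional verifications (the residue computation giving the index and canonical covering in $(U_{1b})$, $(U_{2b})$, $(U_{2c})$, and the appeal to the Koll\'ar--Shepherd-Barron classification to recognize the $(U_{2c})$ quotient as a Gorenstein degenerate cusp) just make explicit what the paper leaves implicit.
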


We give some examples of the singularities in Cor. \ref{singularityu}.

\begin{example}
 Let $G=\langle\sigma\rangle\cong\mathbb Z_2$. Let $C,D'$ be two hyperelliptic curves. Suppose $\sigma$ acts on $C$ and $D'$ as the respective hyperelliptic involutions. Let  $\{Q'_1,\dots,Q'_{2k}\}$ be the fixed points of $\sigma$ on $D'$.   We obtain a stable curve $D$ from $D'$ by identifying $Q'_{2i-1}$ and $Q'_{2i}$ for any $1\leq i\leq k$. Note that $\sigma$ also acts on $D$. Let $G$ act on $C\times D$ diagonally. Then the quotient $(C\times D)/G$ has singularities of type $(U_{1a})$ or $(U_{1b})$.
\end{example}

\begin{example}
 Let $C,D$ be two stable curves. Let $G$ be a finite group acting freely on $C\times D$. Then $(C\times D)/G$ has singularities of type $(U_{1a})$ or $(U_{2a})$.
\end{example}

\begin{example}
Let $G=\langle\sigma\rangle\cong\mathbb Z_2$. Let $C'$ and $D'$ be two smooth curves of genus $\geq 1$ such that $G$ acts (faithfully) on both. Assume $\sigma$ fixes $2k$  points $P_1',P_2',\dots,P_{2k}'$ on $C'$. Let $C$ be the stable curve obtained by identifying $P'_{2i-1}$ and $P'_{2i}$ for $1\leq i\leq k$. Assume $\sigma$ acts freely on $D'$. Pick a point $Q'\in D'$. Let $D$ be the stable curve obtained by identifying $Q'$ and $\sigma(Q')$. Note that $G$ acts on $C$ and $D$. Let $G$ act on $C\times D$ diagonally. Then $(C\times D)/G$ only has singularities of type $(U_{1a})$ or $(U_{2b})$.
\end{example}

\begin{example}
Let $G=\langle\sigma\rangle\cong\mathbb Z_2$. Let $C'$ and $D'$ be two smooth curves of genus $\geq 1$ such that $G$ acts (faithfully) on both. Assume $\sigma$ fixes $2k$ points $P'_1,P'_2,\dots,P'_{2k}$ on $C'$. We obtain a stable curve $C$ from $C'$ by identifying $P_{2i-1}'$ and $P_{2i}'$ for $1\leq i\leq k$.  Similarly, we can obtain a stable curve $D$ from $D'$. Note that $\sigma$ also acts on $C$ and $D$. Let $G$ acts on $C\times D$ diagonally. Then the quotient $(C\times D)/G$ has singularities of type $(U_{1a})$ or $(U_{2c})$.
\end{example}

\subsection{Singularities of degenerations of surfaces isogenous to a product of mixed type}\label{sectionsmoothingm}

Now we consider the mixed case (cf. Section 1.2).

\begin{proposition}[Criterion for free smoothings in the mixed case]\label{smoothingm}
 Let $C$ be a stable curve and $G^\circ<\text{Aut}(C)$ a finite group.  Let
\[
 1\rightarrow G^\circ\rightarrow G\rightarrow \mathbb{Z}_2 \rightarrow 1,
\]
be a non-split extension, yielding a class $[\varphi]$ in $\text{Out}(G^\circ)=\text{Aut}(G^\circ)/\text{Int}(G^\circ)$, which is of order $\leq 2.$ Fix a representative $\varphi$ of the above class. Suppose there exists an element $\tau'\in G\setminus G^{\circ}$ such that, setting $\tau=\tau'^2$, we have
\begin{enumerate}
 \item [(I)]{$\varphi(\gamma)=\tau'\gamma\tau'^{-1}$},
 \item [(II)]{$G$ acts on $C\times C$ by the formulae: $\gamma(P,Q)=(\gamma P,(\varphi\gamma) Q)$ for $\gamma$ in $G^\circ$; whereas the lateral class of $G^\circ$ consists of the transformations
\[
 \tau'\gamma(P,Q)=((\varphi\gamma)Q,\tau\gamma P).
\] }
\end{enumerate}
Then $(C\times C,G)$ admits a free smoothing if and only if the following hold:
\begin{enumerate}
 \item [(i)]{The pair $(C\times C,G^\circ)$ satisfies one of the properties $(U_0)$, $(U_1)$, $(U_2)$ for any point on $C\times C$, as described in Prop. \ref{smoothingu}.}
 \item [(ii)]{There are only finitely many points with nontrivial stabilizers on $C\times C$.}
 \item [(iii)]{If $(P,Q)\in C\times C$ is such that $G_{(P,Q)}\nsubseteq G^\circ$, then  $P,Q$ are both nodes. }
\end{enumerate}
\end{proposition}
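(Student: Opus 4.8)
The plan is to reduce the mixed case to the unmixed case analyzed in Proposition \ref{smoothingu}, using the subgroup $G^\circ$ of index $2$, and then to account separately for the contribution of elements in $G\setminus G^\circ$. For the $``\Rightarrow"$ direction, suppose $\mathcal{Z}=\mathcal{C}\times_\Delta\mathcal{C}\rightarrow\Delta$ is a free smoothing of $(C\times C,G)$, where $G^\circ$ acts on the second factor twisted by $\varphi$. First I would observe that restricting the $G$-action to $G^\circ$ gives a free smoothing of $(C\times C,G^\circ)$: indeed $G^\circ$ still acts fibrewise, still acts freely on the general fibre (being a subgroup of $G$), and acts diagonally after the identification of $C$ with the $\varphi$-twisted copy, so Proposition \ref{smoothingu} applies and yields (i). Property (ii) is then a consequence of Lemma \ref{lemma1} applied to the curve $C$ together with the product structure — more precisely, $G^\circ$-stabilizers on $C\times C$ are products of stabilizers on the factors, each of which is finite by Lemma \ref{lemma1}, and the coset $\tau'G^\circ$ contributes only finitely many more fixed points because a transformation $\tau'\gamma$ of the form $(P,Q)\mapsto((\varphi\gamma)Q,\tau\gamma P)$ can only fix $(P,Q)$ with $P=(\varphi\gamma)Q$, which confines $P$ to finitely many curves, and then finiteness on each such curve follows as in Lemma \ref{lemma1}. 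For (iii), suppose $(P,Q)$ has $G_{(P,Q)}\nsubseteq G^\circ$; I would pick $g\in G_{(P,Q)}\setminus G^\circ$ and analyze the fixed-point condition $g(P,Q)=(P,Q)$, i.e. $(\varphi\gamma)Q=P$ and $\tau\gamma P=Q$ for the appropriate $\gamma$. If either $P$ or $Q$ were a smooth point, one of the two factor-automorphisms occurring (after squaring $g$, which lands in $G^\circ$) would have a smooth fixed point and hence, by Lemma \ref{lemma3}, would fix points on the general fibre; combined with the other factor this would contradict freeness on $\mathcal{C}_t\times\mathcal{C}_t$. The careful bookkeeping here — tracking how $g$ and $g^2$ act on each factor and invoking Lemma \ref{lemma3} and Lemma \ref{lemma5} appropriately — is the main technical point.

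For the $``\Leftarrow"$ direction, assume (i), (ii), (iii) hold. By (i) and Proposition \ref{smoothingu}, the pair $(C\times C, G^\circ)$ admits a free smoothing; concretely, by Theorem \ref{smoothingcurve} the curve $C$ admits a $G^\circ$-equivariant smoothing $\mathcal{C}\rightarrow\Delta$, and one checks that the linearized local models at the nodes guaranteed by $(U_0),(U_1),(U_2)$ are compatible with the twisting by $\varphi$. I would then promote this to a $G$-equivariant smoothing: since $\varphi$ is realized by conjugation by $\tau'$ on $G^\circ$, and $\tau'$ permutes the node-orbits of $C$, the element $\lambda\in\mathrm{Ext}^1_{\mathcal{O}_C}(\Omega_C,\mathcal{O}_C)^{G^\circ}$ produced in the proof of Theorem \ref{smoothingcurve} can be chosen $G$-invariant after averaging over the $\mathbb{Z}_2$-action (the invariant subspace at each node-orbit is still at least $1$-dimensional and the $\mathbb{Z}_2$-action on the sum of these lines has a nonzero invariant vector with all node-components nonzero). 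This gives $\mathcal{C}\rightarrow\Delta$ on which all of $G$ acts, hence a family $\mathcal{Z}=\mathcal{C}\times_\Delta\mathcal{C}\rightarrow\Delta$ with the $G$-action twisted as in (II). It remains to verify that $G$ acts freely on $\mathcal{Z}_t$ for $t\neq 0$. For $g\in G^\circ$ this is exactly the content of the unmixed criterion (i) via Proposition \ref{smoothingu}. For $g\in\tau'G^\circ$, write $g(P,Q)=((\varphi\gamma)Q,\tau\gamma P)$; a fixed point on $\mathcal{Z}_t$ forces $g^2\in G^\circ$ to fix a point on $\mathcal{Z}_t$, and I would show using (iii) that any such potential fixed point on $\mathcal{Z}_0$ sits over a pair of nodes, then use the explicit local models $(U_{2a}),(U_{2b}),(U_{2c})$ — where every nontrivial element interchanges the branches of at most one factor — to conclude via Lemma \ref{lemma5} that $g$ itself has no fixed point on the general fibre. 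Thus $\mathcal{Z}\rightarrow\Delta$ is a free smoothing.

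The step I expect to be the main obstacle is the $``\Rightarrow"$ proof of (iii), and symmetrically the freeness check for the coset $\tau'G^\circ$ in the $``\Leftarrow"$ direction: unlike in the unmixed case, an element of $G\setminus G^\circ$ does not act diagonally on $C\times C$, so one cannot directly quote Lemmas \ref{lemma3} and \ref{lemma5} about a single factor. The right device is to pass to $g^2\in G^\circ$, whose action on each factor is of the form $P\mapsto(\varphi\gamma)\tau\gamma P$ (up to the identification), and to relate the fixed-point behavior of $g$ on the fibre $\mathcal{C}_t\times\mathcal{C}_t$ to that of $g^2$ on a single $\mathcal{C}_t$, being careful that $g$ may fix a point even when $g^2$ does not only if the two factors get swapped — which is precisely why the "both nodes" conclusion and the order-$2$ local normal form show up. I would isolate this as a short lemma about automorphisms of the form $(P,Q)\mapsto(\alpha Q,\beta P)$ of a smoothing $\mathcal{C}\times_\Delta\mathcal{C}$ and then apply it twice.
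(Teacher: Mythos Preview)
Your overall architecture---reduce to $G^\circ$ via Proposition~\ref{smoothingu}, then handle the coset $\tau'G^\circ$ separately---is exactly the paper's. But the proposal has a recurring gap: you never isolate the algebraic fact that drives both directions, namely that an element $\tau'\gamma\in G\setminus G^\circ$ of order $2$ (equivalently $(\varphi\gamma)\tau\gamma=1$) fixes the entire curve $\{((\varphi\gamma)Q,Q):Q\in C\}\subset C\times C$. This is Remark~\ref{mixorder}, and it is what links hypothesis (ii) to the freeness on general fibres.

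Concretely, in your ``$\Rightarrow$'' argument for (ii) you say ``finiteness on each such curve follows as in Lemma~\ref{lemma1}''. But a fixed point of $\tau'\gamma$ on the graph of $\varphi\gamma$ corresponds exactly to a fixed point of $(\varphi\gamma)\tau\gamma\in G^\circ$ on $C$, and Lemma~\ref{lemma1} gives finiteness only if this element is $\neq 1$. The paper first extracts condition (B) (that $\varphi(\gamma)\tau\gamma\notin\Gamma$, in particular $\neq 1$) from the freeness on $\mathcal{C}_t\times\mathcal{C}_t$, and uses it here. Your argument for (iii) has the same hole: squaring $g\in G_{(P,Q)}\setminus G^\circ$ and applying Lemma~\ref{lemma3} yields only $g^2=1$ by freeness, which is not yet a contradiction; you need (B) to finish. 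The paper's route for (iii) is also more direct than yours: $g(P,Q)=(P,Q)$ gives $P=(\varphi\gamma)Q$, so $P$ and $Q$ have the \emph{same} type, which immediately disposes of the ``one node, one smooth'' case without any squaring.

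In the ``$\Leftarrow$'' direction, your plan to use Lemma~\ref{lemma5} to show $\tau'\gamma$ is free on $\mathcal{Z}_t$ does not work as stated, because $\tau'\gamma$ swaps the two factors and Lemma~\ref{lemma5} is a statement about a single factor. The paper instead does a short explicit computation at a node--node fixed point (guaranteed by (iii)): in local coordinates $xy=t^n$, $zw=t^m$ the action is $(x,y,z,w)\mapsto(az,bw,x,y)$, and a fixed point for $t\neq 0$ forces $a=b=1$, hence $(\tau'\gamma)^2=1$; then $\tau'\gamma$ fixes an entire curve on $C\times C$, contradicting (ii). So (ii) is not merely a side condition---it is exactly what closes the argument. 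Finally, your detour through a ``$G$-equivariant'' smoothing of $C$ is unnecessary (and slightly ill-posed, since $G$ does not act on $C$): the paper simply observes that the same family $\mathcal{C}\to\Delta$ serves as a $G^\circ$-smoothing for both the given and the $\varphi$-twisted action, and defines the $G$-action on $\mathcal{C}\times_\Delta\mathcal{C}$ directly by the formulae in (II).
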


\begin{proof}
 \textquotedblleft $\Rightarrow$ \textquotedblright Let $\mathcal{Z}\rightarrow\Delta$ be a free smoothing of $(C\times C, G)$. It is necessarily of the form $\mathcal C_1\times_{\Delta}\mathcal C_2\rightarrow \Delta$, where $\mathcal C_1\rightarrow \Delta$ is a smoothing of $(C,G^\circ)$ and $\mathcal C_2\rightarrow \Delta$ is a smoothing of $C$ yet with a different $G^\circ$-action given by $G^\circ\xrightarrow{\varphi}G^\circ<\text{Aut}(C)$. Note also that $\mathcal Z\rightarrow\Delta$ is automatically a free smoothing of $(C\times C,G^\circ)$. Hence $(C\times C,G^\circ)$ satisfies $(U_0)$, $(U_1)$, $(U_2)$ in Prop. \ref{smoothingu}.

Let $\Gamma$ be the subset of $G^\circ$ consisting of the transformations having fixed points on $\mathcal{C}_{1t}$ for $t\neq 0 $. Since $\mathcal C_1\times_{\Delta}\mathcal C_2\rightarrow\Delta$ is a free smoothing of $C\times C$, we have
\begin{enumerate}
 \item [(A)]{$\Gamma\cap\varphi(\Gamma)=\{1\}$}.
 \item [(B)]{there is no $\gamma$ in $G^\circ$ such that $\varphi(\gamma)\tau\gamma$ is in $\Gamma$. In particular, $\varphi(\gamma)\tau\gamma\neq 1$.}
\end{enumerate}
The above two conditions simply say that $G$ acts freely on $\mathcal C_{1t}\times\mathcal C_{2t}$ for $t\neq0$.

Suppose there are infinitely many fixed points on $C\times C$. Then some $1\neq\sigma\in G$ fixes infinitely many points and we have $\sigma\in G\setminus G^\circ$ by  Lemma \ref{lemma1}. Hence $\sigma=\tau'\gamma\in G\setminus G^\circ$ for some $\gamma\in G^\circ$. Since $\sigma^2\in G^\circ$ also fixes infinitely many points, we have $\varphi(\gamma)\tau\gamma=\sigma^2=1$ by Lemma \ref{lemma1} again, which contradicts (B) above. This proves (ii).

For (iii), we discuss the possible stabilizer of a point $(P,Q)\in C\times C$ in the following 2 cases.

\textbf{Case \text{$(M_0)$}}: $P,Q$ are both smooth points of $C$.

We will show $G_{(P,Q)}=\{1\}$ in this case. Since $G^\circ$ acts freely on $\mathcal C_{1t}\times\mathcal C_{2t}$ for $t\neq 0$. Note that $G^\circ\cap G_{(P,Q)}=\{1\}$ by the claim for the unmixed $(U_0)$ case. Suppose on the contrary that there is a $1\neq\tau_1\in G_{(P,Q)}$, then $\tau_1=\tau'\gamma\in G\setminus G^\circ$ for some $\gamma\in G^\circ$. Now $\tau_1^2\in G^\circ\cap G_{(P,Q)}$ implies that $\tau_1^2=1$ and hence
\[
 \tau'\gamma\tau'\gamma=1\Rightarrow (\tau'\gamma\tau'^{-1})\tau'^2\gamma=1\Rightarrow \varphi(\gamma)\tau\gamma =1.
\]
This contradicts (B) above.

\textbf{Case \text{$(M_1)$}}: One of $P,Q$ is a node, while the other is a smooth point.

We will show that $G_{(P,Q)}\subset G^\circ$ in this case. Otherwise $(P,Q)$ is fixed by $\tau'\gamma\in G\setminus G^\circ$ for some $\gamma\in G^\circ$:
\[
 (P,Q)=\tau'\gamma(P,Q)=((\varphi\gamma)Q,\tau\gamma P),
\]
so $P=(\varphi\gamma)Q$ and $Q=\tau\gamma P$. In particular, either $P, Q$ are both nodes or they are both smooth points of $C$, a contradiction. Hence (iii) follows.

 \textquotedblleft $\Leftarrow$ \textquotedblright By Prop. \ref{smoothingu}, condition (i) implies that $(C,G^\circ)$ has a smoothing $\mathcal C_1\rightarrow\Delta$ and there is another smoothing $\mathcal C_2\rightarrow \Delta$ of $C$ with a different $G^\circ$-action induced by $\varphi$. These two smoothings of $C$ are in fact isomorphic via $\varphi$. Set $\mathcal{Z}:=\mathcal C_1\times_\Delta\mathcal C_2\rightarrow\Delta$. We can introduce an action of $G$ on $\mathcal{Z}\rightarrow \Delta$ by the formulae in $(II)$: for any $(P,Q)\in \mathcal C_{1t}\times\mathcal C_{2t},\gamma(P,Q)=(\gamma P,(\varphi \gamma)Q)$ if $\gamma\in G^\circ$; whereas for $\tau'\gamma\in G\setminus G^\circ$,
\[
 \tau'\gamma (P,Q)=((\varphi\gamma)Q,\tau\gamma P).
\]

It remains to check that $G$ acts freely on $\mathcal{Z}_t$ for $t\neq 0$. Note that $G^\circ$ acts freely by hypothesis (i) and Prop. \ref{smoothingu}. Now let $\tau'\gamma\in G\setminus G^\circ$ for some $\gamma\in G^\circ$. If $\tau'\gamma$ does not fix points on $C\times C$, then obviously $\tau'\gamma$ does not fix points on $\mathcal{C}_t\times\mathcal{C}_t$ for $t\neq0$. If $\tau'\gamma\in G_{(P,Q)}$ for some $(P,Q)\in C\times C$, then both $P,Q$ must be nodes by (iii) and we can find local embeddings of the first factor: $xy=t^n$ (resp. of the second factor: $zw=t^m$) such that the action of $\tau'\gamma$ around $(P,Q)$ is given by:
\[
 (x,y,z,w,t)\mapsto (az,bw,x,y,t),
\]
where $a,b\in\mathbb{C}^*$ are nonzero numbers. Hence $(\tau'\gamma)^2(x,y,z,w,t)=(ax,by,az,bw,t)$.
If $t\neq 0$, then $xy=t^n,zw=t^m$ implies that $xyzw\neq 0$. Suppose $\tau'\gamma$ fixes some point $(x,y,z,w,t)\in\mathcal C_{1t}\times\mathcal C_{2t}$ for $t\neq 0$, then
\[
 az=x,bw=y,x=z,y=w.
\]
This implies that $a=1,b=1$ and $(\tau'\gamma)^2=1$.  Now, for any $(P',Q')\in \mathcal{Z}_t$, we have
\[
 (\tau'\gamma)^2(P',Q')=((\varphi\gamma)\tau\gamma P', \tau\gamma(\varphi\gamma)Q'),
\]
 so $\tau\gamma(\varphi\gamma)=(\varphi\gamma)\tau\gamma=1$, which implies that $\{((\varphi\gamma)Q',Q')|Q'\in C\}\subset C\times C$ is fixed by $\tau'\gamma$. This contradicts hypothesis (ii) that there are only finitely many points with nontrivial stabilizers. Therefore $\tau'\gamma$ does not fix any points on $\mathcal C_{1t}\times\mathcal C_{2t}$ for $t\neq 0$ and $G$ acts freely on $\mathcal C_{1t}\times\mathcal C_{2t}$ for $t\neq 0$.
\end{proof}

\begin{remark}\label{mixorder}
 Let the notation be as in Prop. \ref{smoothingm}. Then the statement (ii) in the proposition is equivalent to  the assertion that  any $\tau'\gamma\in G\setminus G^\circ$ has order $>2$. Indeed, if there are infinitely many points with nontrivial stabilizers, then some $1\neq \sigma\in G$ fixes infinitely many points. Note that $\sigma\in G\setminus G^\circ$ by Lemma \ref{lemma1}. Since $\sigma^2\in G^\circ$ also fixes infinitely many points, we have $\sigma^2=1$ by Lemma \ref{lemma1} again. On the other hand, suppose $\sigma=\tau'\gamma \in G\setminus G^\circ$ is of order 2. Then $(\varphi\gamma)\tau \gamma = \tau\gamma(\varphi\gamma) =1$ and $\sigma$ fixes every point on the curve $\{((\varphi\gamma) Q,Q)|\ Q\in C\}$.
\end{remark}

According to Theorem \ref{vO}, a surface $X$ is a stable degeneration of surfaces isogenous to a product of mixed type if and only if $X=(C\times C)/G$ where $C$ is a stable curve and $G$ is a finite group acting in the way described in Prop. \ref{smoothingm}.
\begin{corollary}\label{singularitym}
 The possible singularities of a stable degeneration $X$ of surfaces isogenous to a product of mixed type are as follows:
\begin{enumerate}
\item [$(U)$]{The singularities of types $(U_{1a})$, $(U_{1b})$, $(U_{2a})$, $(U_{2b})$, $(U_{2c})$ occurring in the unmixed case (see Cor. \ref{singularityu}). }
\item [$(M)$]{A quotient of the degenerate cusp of type $(U_{2a})$ under an action of automorphisms $\tau_1$ and $\tau_2$:
\begin{align*}
 \tau_1\colon&(x,y,z,w)\mapsto(\xi x,\xi^{-1} y,\xi^q z,\xi^{-q} w),\\
 \sigma\colon&(x,y,z,w)\mapsto(a z, a^{-1} w,  b x, b^{-1}y),
\end{align*}
where $\xi$ is a  primitive  $n$-th root of unity, $(q,n)=1$ and $ab\in \langle\xi\rangle\setminus \langle \xi^{q+1} \rangle$. In this case, the index of the singularity is $2$ and the canonical covering is a singularity of type $(U_{2c})$.}
\end{enumerate}
\end{corollary}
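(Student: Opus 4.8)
The plan is to read off the singularity of $X=(C\times C)/G$ at the image of a point $(P,Q)$ from the local quotient $(C\times C)/G_{(P,Q)}$. If $G_{(P,Q)}\subseteq G^\circ$ the situation is identical to the unmixed case, so Corollary~\ref{singularityu} already produces the singularities listed under $(U)$, and I only have to deal with a point $(P,Q)$ for which $H:=G_{(P,Q)}\nsubseteq G^\circ$. By Proposition~\ref{smoothingm}(iii) the points $P,Q$ are then both nodes of $C$, so a neighbourhood of $(P,Q)$ in $C\times C$ is the degenerate cusp $W=(xy=0,\,zw=0)\subset\mathbb C^4$ (type $(U_{2a})$); since the Zariski tangent space of $W$ at the origin is $\mathbb C^4$, Cartan's lemma lets me assume $H$ acts linearly on $\mathbb C^4$ preserving $W$ (and hence permuting its four $2$-plane components).

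Set $H_0:=H\cap G^\circ$. The extension $1\to G^\circ\to G\to\mathbb Z_2\to 1$ restricts to $H$, so $[H:H_0]=2$; moreover $H_0$ is the $G^\circ$-stabilizer of $(P,Q)$, hence cyclic and satisfying $(U_2)$ by Proposition~\ref{smoothingm}(i) (and nontrivial, since $\sigma^2\in H_0$ for $\sigma\in H\setminus H_0$ and $\sigma$ has order $>2$ by Remark~\ref{mixorder}). The first---and, I expect, hardest---step is to show that no element of $H_0$ interchanges the local branches of either factor. Indeed, if $\gamma_0\in H_0$ interchanged the branches of the first factor at $P$, then Theorem~\ref{smoothingcurve} and Lemma~\ref{lemma1} give $\gamma_0^2=1$, while $(U_2)$ forces $\gamma_0$ not to interchange the branches of the second factor at $Q$; conjugating by an element $\sigma\in H\setminus H_0$, which interchanges the two factors, one checks that $\sigma\gamma_0\sigma^{-1}\in H_0$ then interchanges the branches of the second factor at $Q$, so that $\gamma_0\cdot(\sigma\gamma_0\sigma^{-1})\in H_0$ interchanges the branches of \emph{both} factors, contradicting $(U_2)$. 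Consequently $H_0$ is of type $(U_{2c})$: a generator $\tau_1$ acts as $(x,y,z,w)\mapsto(\xi x,\xi^{-1}y,\xi^{q}z,\xi^{-q}w)$ with $\xi$ a primitive $n$-th root of unity, $n=|H_0|$ and $(q,n)=1$.

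Next I analyse an element $\sigma\in H\setminus H_0$. It interchanges the two factors and, after a relabelling of the coordinates absorbing any within-factor branch interchange into the substitution $q\mapsto -q$, it preserves the local equations $xy=t^{\ell}=zw$ of the free smoothing; hence it has the form $\sigma\colon(x,y,z,w)\mapsto(az,a^{-1}w,bx,b^{-1}y)$ for some $a,b\in\mathbb C^*$. The group relations then pin $a,b$ down. Since $H_0\trianglelefteq H$, conjugation yields $\sigma\tau_1\sigma^{-1}\in\langle\tau_1\rangle$, and the computation $\sigma\tau_1\sigma^{-1}\colon(x,y,z,w)\mapsto(\xi^{q}x,\xi^{-q}y,\xi z,\xi^{-1}w)$ forces $\sigma\tau_1\sigma^{-1}=\tau_1^{q}$ (so necessarily $q^2\equiv1\pmod n$), while $\sigma^2=\varphi(\gamma)\tau\gamma\in H_0$ acts as $(x,y,z,w)\mapsto(ab\,x,(ab)^{-1}y,ab\,z,(ab)^{-1}w)$, so $ab\in\langle\xi\rangle$. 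Finally, computing the fixed locus of each element $\sigma\tau_1^{\,j}$ of the nontrivial coset shows it is a positive-dimensional linear subspace exactly when $ab\in\langle\xi^{q+1}\rangle$; since Proposition~\ref{smoothingm}(ii) demands only finitely many fixed points on $C\times C$, we get $ab\notin\langle\xi^{q+1}\rangle$. This exhibits $H$ in the form stated in $(M)$.

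It remains to identify the quotient. The generator of $\omega_W$---the $2$-form with logarithmic poles along the double curve of $W$, which on the component $\{y=w=0\}$ equals $\frac{dx}{x}\wedge\frac{dz}{z}$---is fixed by $\tau_1$ but sent to its negative by $\sigma$ (the sign coming from the interchange of the two factors of the wedge product). Hence $W/H_0$ is a Gorenstein degenerate cusp, namely the singularity of type $(U_{2c})$, and $W/H=(W/H_0)/\langle\bar\sigma\rangle$ is its quotient by the involution $\bar\sigma$, which acts by $-1$ on the dualizing sheaf and---again by finiteness of the fixed locus---with isolated fixed points. Therefore $W/H$ has index $2$ and its canonical covering is $W/H_0$, of type $(U_{2c})$, as claimed. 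The routine parts are the explicit linear algebra and the differential-form computation, already familiar from the unmixed case; the genuine difficulty is the branch-interchange argument of the second paragraph, which needs careful tracking of how conjugation by the factor-swapping element $\sigma$ transports the linearised local data of $H_0$ between the two nodes $P$ and $Q$.
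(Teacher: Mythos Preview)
Your proposal is correct and follows essentially the same route as the paper: reduce to the case $G_{(P,Q)}\nsubseteq G^\circ$, invoke Proposition~\ref{smoothingm}(iii) to place both $P,Q$ at nodes, linearise via Cartan's lemma, rule out branch interchange in $H_0$, and then read off the form of $\tau_1,\sigma$ together with the constraint on $ab$.

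Two minor differences are worth noting. First, for the branch-interchange step the paper argues more directly: assuming the generator $\tau_1$ of $H_0$ interchanges the branches of the first factor, it has order~$2$, hence $\sigma^2=\tau_1$ (the only nontrivial element of $H_0$); then the second-factor action of $\tau_1$ is $\varphi(\tau_1)=(\tau\gamma)\tau_1(\tau\gamma)^{-1}$, which visibly interchanges the branches at $Q=(\tau\gamma)P$, so $\tau_1$ itself violates $(U_2)$. Your variant via the product $\gamma_0\cdot(\sigma\gamma_0\sigma^{-1})$ also works (if $\sigma\gamma_0\sigma^{-1}$ already interchanges both factors you are done immediately; otherwise the product does), but you should say explicitly why this product is nontrivial---which is clear since it interchanges the first-factor branches. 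Second, the paper derives $ab\notin\langle\xi^{q+1}\rangle$ from Remark~\ref{mixorder} (no element of $G\setminus G^\circ$ has order~$2$) via the identity $(\tau_1^{k}\sigma)^2=\tau_1^{k(q+1)}\sigma^2$, whereas you compute the fixed locus of $\sigma\tau_1^{j}$ directly and invoke Proposition~\ref{smoothingm}(ii); these are equivalent by that same remark. Your added observation $q^2\equiv 1\pmod n$ and the explicit verification of index~$2$ via the action on $\frac{dx}{x}\wedge\frac{dz}{z}$ are correct and supply details the paper leaves implicit.
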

\begin{proof}
Let $(C\times C,G)$ be as in Prop. \ref{smoothingm} such that $X=(C\times C)/G$ and let $\pi:C\times C\rightarrow X$ be the quotient map. If $(P,Q)\in C\times C$ is such that $G_{(P,Q)}\subset G^\circ$, then the singularity $\pi(P,Q)\in X$ is of type $(U_{1a})$, $(U_{1b})$, $(U_{2a})$, $(U_{2b})$ or $(U_{2c})$.

If $(P,Q)\in C \times C$ is such that $G_{(P,Q)}\nsubseteq G^\circ$, then $P,Q$ are both nodes of $C$ by Prop. \ref{smoothingu}. We want to know the action of $G_{(P,Q)}$ around $(P,Q)$.
Note that $(C\times C,G^\circ)$ is of unmixed type and $G^\circ\cap G_{(P,Q)}$ is just the stabilizer of $(P,Q)\in C\times C$ under the action of $G^\circ$. By the analysis done for the unmixed type, $G^\circ\cap G_{(P,Q)}=\langle\tau_1\rangle$ for some $\tau_1\in G^\circ$ and $\tau_1$ interchanges at most the branches of one factors of $C\times C$. We will show that $\tau_1$ does not interchange any branches at $P$ or $Q$.

By assumption there is a $\gamma\in G^\circ$ such that $\sigma:=\tau'\gamma\in G_{(P,Q)}$. If $\tau_1$ interchanges the branches at one of $P,Q$, say $P$, then $|\tau_1|=2$ (Prop. \ref{smoothingu}). Note that
\[
 (\varphi\gamma)\tau\gamma=(\tau'\gamma)^2\in G_{(P,Q)}\cap G^\circ=\langle\tau_1\rangle.
\]
By condition (B) in the proof of Prop. \ref{smoothingm}, $(\varphi\gamma)\tau\gamma\neq 1$. On the other hand, $|\tau_1|=2$, so $(\varphi\gamma)\tau\gamma=\tau_1$. Since $\tau'\gamma\in G_{(P,Q)}$, we have $\tau'\gamma(P,Q)=(P,Q)$, i.e., $(\varphi\gamma)Q=P$ and $(\tau\gamma)P=Q$. Now the fact that $\tau_1$ interchanges the branches of $C$ at $P$ implies that $\tau\gamma(\varphi\gamma)=\tau\gamma\tau_1(\tau\gamma)^{-1}$ interchanges the branches of $C$ at $Q$. Since $\tau_1$ acts on the second factor of $C\times C$ via $\tau\gamma(\varphi\gamma)$, $\tau_1$ also interchanges the branches of the second factor $C$ at $Q$, a contradiction.

So the actions of $\tau_1,\sigma$ are of the form
\begin{align*}
 \tau_1\colon &(x,y,z,w)\mapsto(\xi x,\xi^{-1} y,\xi^q z,\xi^{-q} w),\\
 \sigma\colon &(x,y,z,w)\mapsto(a z, a^{-1} w,b x, b^{-1}y),
\end{align*}
where $C:(xy=0)\subset \mathbb{C}^2$ and $C:(zw=0)\subset \mathbb{C}^2$ are suitable local embeddings of $C$ around $P,Q$ and $\xi $ is a  primitive  $|\tau_1|$-th root of unity, $(q,|\tau_1|)=1$. Since $\sigma^2\in\langle\tau_1\rangle$ and $(\tau^k_1\sigma)^2\neq 1$ for any $k$ (Remark \ref{mixorder}), we can easily see that $ab\in \langle\xi\rangle\setminus \langle \xi^{q+1} \rangle$. Hence the singularity $\pi(P,Q)\in X$ is of type $(M)$.
\end{proof}

We give an example of singularity of type $(M)$.
\begin{example}
 Let $G=\langle\sigma\rangle\cong\mathbb Z_4$. Then $\tau_1:=\sigma^2$ has order 2. Let $C'$ be a smooth curve of genus $\geq 2$. Suppose $\tau_1$ acts on $C'$ so that there are exactly two fixed points $P'_1$ and $P'_2$. We obtain a stable curve $C$ from $C'$ by identifying $P'_1$ and $P'_2$. Let $P\in C$ denote the image  of $P'_1$ and $P'_2$. Then $\tau_1$ also acts on $C$ and has exactly one fixed point $P$.

We can give an action of $G$ on $C\times C$ as follows:
\[
 \sigma(P_1,P_2):=(P_2, \tau_1 P_1),\text{ for any point } (P_1,P_2)\in C\times C.
\] Then $\tau_1(P_1,P_2)=(\tau_1 P_1,\tau_1 P_2)$ and $\sigma\tau_1(P_1,P_2)=(\tau_1 P_2, P_1)$. It is easy to see that $(P,P)\in C\times C$ is the only point with a  nontrivial stabilizer $G$ and the quotient $(C\times C)/G$ has singularities of type $(U_{1a})$ or $(M)$.
\end{example}

\section{Connected components of the moduli space}\label{sectionconnectedcomponent}
In this section we will study the $\mathbb{Q}$-Gorenstein deformations of the stable degenerations of surfaces isogenous to a product. As a result, we get some connected components of the moduli space of stable surfaces $M_{a,b}^{st}$ defined in Section \ref{sectionmoduli}.

Let $Z=C\times D$ be a product of two stable curves and let $G$ be a finite group acting on $Z$ with finitely many fixed points. Let $\pi\colon Z\rightarrow X$ be the quotient map. For any $G$-equivariant coherent sheaf $\mathcal{F}$ on $Z$, we define an $\mathcal{O}_X$-module $\pi^G_*\mathcal{F}:=(\pi_*\mathcal{F})^G$.

The following lemma is well-known.
\begin{lemma}\label{lemmaequivariantcohomology}
 Let $\mathcal{F}$ be a $G$-equivariant coherent sheaf on $Z$. Then for any $p\geq 0$, we have $H^p(Z,\mathcal{F})^G=H^p(X,\pi_*^G\mathcal{F})$.
\end{lemma}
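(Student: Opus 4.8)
The plan is to reduce the statement to the standard fact that taking $G$-invariants is an exact functor on the category of $\mathbb{C}[G]$-modules (Maschke's theorem), combined with the finiteness of the quotient map $\pi$. First I would note that since $G$ is finite and $\pi\colon Z\to X$ is the quotient map, $\pi$ is a finite morphism, so $\pi_*$ is exact on quasi-coherent sheaves and $H^p(Z,\mathcal{F})=H^p(X,\pi_*\mathcal{F})$ for all $p\geq 0$. The group $G$ acts on the sheaf $\pi_*\mathcal{F}$, and hence on each cohomology group $H^p(X,\pi_*\mathcal{F})$, compatibly with this identification; this is where one uses that $\mathcal{F}$ is $G$-equivariant.

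Next I would use the decomposition $\pi_*\mathcal{F}=(\pi_*\mathcal{F})^G\oplus \mathcal{G}$ as $\mathcal{O}_X$-modules, where $\mathcal{G}$ is the complementary $\mathbb{C}[G]$-isotypic part, obtained by applying the averaging idempotent $e=\frac{1}{|G|}\sum_{g\in G}g$ (which makes sense because we work over $\mathbb{C}$); note $\pi_*^G\mathcal{F}=e\cdot\pi_*\mathcal{F}$ is a direct summand, hence coherent. Taking cohomology commutes with finite direct sums, so $H^p(X,\pi_*\mathcal{F})=H^p(X,\pi_*^G\mathcal{F})\oplus H^p(X,\mathcal{G})$, and this is a decomposition of $G$-modules. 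Taking $G$-invariants of both sides and observing that $H^p(X,\pi_*^G\mathcal{F})$ already carries the trivial $G$-action while the $G$-invariants of $H^p(X,\mathcal{G})$ vanish (since $\mathcal{G}$ has no trivial isotypic component and cohomology is $\mathbb{C}$-linear in the sheaf), we conclude
\[
 H^p(Z,\mathcal{F})^G = H^p(X,\pi_*\mathcal{F})^G = H^p(X,\pi_*^G\mathcal{F}).
\]

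Alternatively, and perhaps more cleanly, I would avoid the explicit splitting and argue functorially: the functor $\mathcal{F}\mapsto (\pi_*\mathcal{F})^G$ from $G$-equivariant quasi-coherent sheaves on $Z$ to quasi-coherent sheaves on $X$ is the composition of the exact functor $\pi_*$ with the exact functor $(-)^G$ (exact by Maschke over $\mathbb{C}$), hence exact; moreover it sends $G$-equivariant injectives (or $\pi_*$-acyclic, $\Gamma$-acyclic resolutions) to $\Gamma$-acyclic objects. Then a Grothendieck spectral sequence / $\delta$-functor comparison identifies $R^p(\Gamma\circ(\pi_*(-))^G)$ with $H^p(X,\pi_*^G(-))$ on one hand and with $H^p(Z,-)^G$ on the other. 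I expect the only subtle point to be bookkeeping the $G$-equivariant structure carefully — in particular checking that the isomorphism $H^p(Z,\mathcal{F})\cong H^p(X,\pi_*\mathcal{F})$ coming from finiteness of $\pi$ is $G$-equivariant — but since $\pi$ is affine this is essentially immediate from Čech or derived-functor computations, so no real obstacle arises; the lemma is indeed ``well-known'' and the proof is a short formal argument.
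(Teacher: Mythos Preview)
Your argument is correct. The paper itself does not supply a proof at all: it simply cites \cite[Prop.~5.2.2]{G57} (Grothendieck's T\^ohoku paper). What you wrote is essentially a direct verification of that proposition in the present finite-group, characteristic-zero setting: finiteness of $\pi$ gives $H^p(Z,\mathcal{F})\cong H^p(X,\pi_*\mathcal{F})$, and the Reynolds idempotent $e=\tfrac{1}{|G|}\sum_{g}g$ splits off $\pi_*^G\mathcal{F}$ as a direct summand and simultaneously realizes the $G$-invariants on cohomology. Your alternative $\delta$-functor/spectral-sequence formulation is closer in spirit to how Grothendieck states it, but both routes are standard and there is nothing to add.
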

\begin{proof}
 See \cite[Prop. 5.2.2]{G57}.
\end{proof}

\begin{lemma}\label{lemmaequivariantdeformation}
 Suppose all the (possible) singularities on $X$ are of type $(U_{1a})$, $(U_{1b})$, $(U_{2a})$ or $(U_{2b})$. Then $\pi_*^G \mathcal T_Z=\mathcal T_X$ and $\pi_*^G\mathcal{T}^1_Z=\mathcal{T}^1_{QG,X}$.
\end{lemma}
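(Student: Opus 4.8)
The plan is to establish both equalities by a local computation, reducing the global statement to an analysis of the canonical covering at each singular point of $X$ and invoking Cartan's lemma together with Hacking's description of $\mathbb{Q}$-Gorenstein deformations as those induced by equivariant deformations of the canonical covering.

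First I would treat the equality $\pi_*^G\mathcal{T}_Z=\mathcal{T}_X$. Away from the images of the fixed points, $\pi$ is \'etale, so the two sheaves agree there automatically; the issue is local at a singular point $x=\pi(P,Q)\in X$. Let me write $H=G_{(P,Q)}$, which by Cor.~\ref{singularityu} is cyclic, and let $V$ be a small $H$-invariant analytic neighborhood of $(P,Q)$ in $Z$, so that $U:=V/H$ is a neighborhood of $x$. Then $(\pi_*^G\mathcal{T}_Z)|_U=(\pi_*\mathcal{T}_V)^H=\mathcal{H}om_{\mathcal{O}_U}(\Omega_U,\mathcal{O}_U)=\mathcal{T}_U$, where the middle equality is the standard fact that for a finite quotient $V\to U$ the $H$-invariant pushforward of the tangent sheaf of $V$ equals the (reflexive) tangent sheaf $\mathcal{T}_U=\mathcal{D}er(\mathcal{O}_U)$ — an $H$-invariant derivation of $\mathcal{O}_V$ restricts to a derivation of $\mathcal{O}_V^H=\mathcal{O}_U$, and conversely every derivation of $\mathcal{O}_U$ extends, by normality and the fact that $V$ is the normalization of $U$ in its function field, to an $H$-invariant derivation of $\mathcal{O}_V$. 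This gives the first equality. For the second, the key input is \cite[Prop.~10.13]{Hac01} (quoted in Section~\ref{sectionqgorensteindeformation}): $\mathbb{Q}$-Gorenstein deformations are exactly those that are locally induced by equivariant deformations of the canonical covering. In types $(U_{1a})$ and $(U_{2a})$ the singularity is Gorenstein, so $\mathcal{T}^1_{QG}=\mathcal{T}^1$ and there is nothing to prove locally. In types $(U_{1b})$ and $(U_{2b})$, by Cor.~\ref{singularityu} the canonical covering of $x\in U$ is precisely the Gorenstein singularity $V$ of type $(U_{1a})$ resp.~$(U_{2a})$, with covering group $H$; hence $(\mathcal{T}^1_{QG,X})|_U=(\mathcal{T}^1_V)^H$. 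On the other hand $(\pi_*^G\mathcal{T}^1_Z)|_U=(\mathcal{T}^1_V)^H$ as well, since $Z$ is Gorenstein (it is a product of stable curves, Prop.~\ref{productofcurves}, and these are local complete intersections) so $\mathcal{T}^1_{QG,Z}=\mathcal{T}^1_Z$ and moreover $\mathcal{T}^1_Z$ is supported on the codimension-one locus only along the double curve where it behaves compatibly with $H$-invariants — more precisely $\mathcal{T}^1_Z|_V=\mathcal{E}xt^1_{\mathcal{O}_V}(\Omega_V,\mathcal{O}_V)$, and this commutes with taking $H$-invariants because $H$ is finite and we are over $\mathbb{C}$ (averaging). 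Matching the two gives $\pi_*^G\mathcal{T}^1_Z=\mathcal{T}^1_{QG,X}$.

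The step I expect to be the main obstacle is the careful bookkeeping that the canonical covering of the quotient singularity really is the given Gorenstein model $V$ with its given $H$-action, rather than some intermediate cover, and that the local $H$-action in the chosen coordinates is exactly the one recorded in Cor.~\ref{singularityu}. Concretely one must check: (a) in type $(U_{1b})$ the action $(x,y,z)\mapsto(\xi x,\xi^{-1}y,\xi^q z)$ with $\xi$ a primitive $n$-th root of unity acts freely in codimension one on $(xy=0)\subset\mathbb{C}^3$ and the quotient has index exactly $n$, so that this $\mathbb{Z}_n$ is the full canonical cover; (b) likewise in type $(U_{2b})$ the involution $(x,y,z,w)\mapsto(y,x,-z,-w)$ on the degenerate cusp has quotient of index $2$ with this very cover as canonical cover — both of which are asserted in Cor.~\ref{singularityu} and can be taken as known. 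The remaining subtlety is purely that $\mathcal{E}xt^1$ on the non-normal surface $Z$ commutes with $H$-invariants; since $H$ is finite and $\mathrm{char}=0$, the functor of $H$-invariants is exact, so it commutes with $\mathcal{E}xt$, and the identification is immediate. Assembling the local statements over the (finitely many, by Lemma~\ref{lemma1} applied to each factor) singular points of $X$, and using that both sheaves in question agree on the smooth-in-codimension-one \'etale locus, yields the two claimed equalities globally.
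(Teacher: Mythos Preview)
Your argument for $\pi_*^G\mathcal{T}^1_Z=\mathcal{T}^1_{QG,X}$ is essentially the paper's: both rely on the observation that, under the hypothesis on the singularity types, the local cover $V\to U$ at each point is precisely the canonical covering, so Hacking's description of $\mathbb{Q}$-Gorenstein deformations identifies $\mathcal{T}^1_{QG,X}$ locally with the $H$-invariants of $\mathcal{T}^1_V$. The paper is slightly more explicit in noting that the full preimage $\pi^{-1}(P)=\{Q_j\}$ consists of several points permuted by $G$, with each germ $Q_j\in Z$ a canonical covering of $P\in X$, but this is implicit in your setup.

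For $\pi_*^G\mathcal{T}_Z=\mathcal{T}_X$, however, your argument has a gap. You claim that a derivation of $\mathcal{O}_U$ extends to an $H$-invariant derivation of $\mathcal{O}_V$ ``by normality and the fact that $V$ is the normalization of $U$ in its function field'' --- but $V$ is \emph{not} normal: it is an analytic neighborhood in $Z=C\times D$, which has double normal crossing or degenerate cusp singularities. So the extension step fails as written, and the ``standard fact'' you invoke for finite quotients is standard only for normal $V$. The paper sidesteps this entirely by a reflexivity argument: both $\pi_*^G\mathcal{T}_Z$ and $\mathcal{T}_X$ are $S_2$ sheaves (citing \cite[Lemma~5.1.1]{AbH09}), and since $\pi$ is \'etale off a finite set, the two sheaves agree in codimension one; the $S_2$ property then forces equality everywhere. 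This is both shorter and avoids any direct computation at the fixed points. Your final sentence about ``assembling the local statements over the finitely many singular points of $X$'' also conflates the singular locus of $X$ (which is one-dimensional) with the branch locus of $\pi$ (which is finite); only the latter is where the two sheaves could differ, and the $S_2$ argument makes this distinction transparent.
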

\begin{proof}

First observe that both $\pi_*^G \mathcal T_Z$ and $\mathcal T_X$ are $S_2$-sheaves of $\mathcal{O}_X$-modules (\cite[Lemma 5.1.1]{AbH09}). Since $\pi\colon Z\rightarrow X$ is \'etale off a finite subset, $\pi_*^G \mathcal T_Z$ and $\mathcal T_X$ coincide off the finite subset. Then the $S_2$-property guarantees that $\pi_*^G \mathcal T_Z$ and $\mathcal T_X$ are isomorphic on the whole of $X$.

For $\pi_*^G\mathcal{T}^1_Z=\mathcal{T}^1_{QG,X}$, we view $\pi_*^G\mathcal{T}^1_Z$ (resp. $\mathcal{T}^1_{QG,X}$) as the sheaf of first-order, $G$-equivariant, local deformations of $Z$ (resp. first-order, $\mathbb Q$-Gorenstein, local deformations of $X$). Let $P$ be any point on $X$ and let $\pi^{-1}(P)=\{Q_j\}_j$ be inverse image of $P$. Since the possible singularities on $X$ are of type $(U_{1a})$, $(U_{1b})$, $(U_{2a})$ or $(U_{2b})$, every germ $Q_j\in Z$ is a canonical covering of $P\in X$ and they are permuted under the action of $G$. Since $\mathbb Q$-Gorenstein deformations of the germ $P\in X$ are precisely those deformations which lift to deformations of the canonical covering, we have a natural identification $\pi_*^G\mathcal{T}^1_Z=\mathcal{T}^1_{QG,X}$ sending a first-order, $G$-equivariant, local deformations of $Z$ to its quotient under $G$.
\end{proof}

\begin{theorem}\label{surjective}
If all the (possible) singularities on $X$ are of type $(U_{1a})$, $(U_{1b})$, $(U_{2a})$ or $(U_{2b})$, then a \emph{semiuniversal} $\mathbb Q$-Gorenstein  deformation of $X$ exists and hence the base $\defo_X^{QG}$ is defined. Moreover, $\defo_X^{QG}$ is smooth.
\end{theorem}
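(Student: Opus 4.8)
The plan is to combine the local-to-global spectral sequence for $\mathbb{Q}$-Gorenstein deformations with an equivariant cohomology computation on the smooth fourfold $Z = C\times D$. First I would invoke Lemma \ref{lemmaequivariantdeformation} to rewrite the local-to-global sequence for $\mathcal{Def}^{QG}_X$ entirely in terms of $G$-equivariant data on $Z$: we have $H^i(X,\mathcal{T}_X) = H^i(X, \pi^G_*\mathcal{T}_Z) = H^i(Z,\mathcal{T}_Z)^G$ by Lemma \ref{lemmaequivariantcohomology}, and $H^0(X,\mathcal{T}^1_{QG,X}) = H^0(Z,\mathcal{T}^1_Z)^G$. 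Thus the sequence
\[
 0\rightarrow H^1(\mathcal{T}_X)\rightarrow T^1_{QG,X}\rightarrow H^0(\mathcal{T}^1_{QG,X})\xrightarrow{\;\mathrm{ob}\;} H^2(\mathcal{T}_X)
\]
becomes the $G$-invariant part of the analogous local-to-global sequence for the \emph{ordinary} deformation theory of $Z$. Since taking $G$-invariants is exact (char.\ $0$), it suffices to show the obstruction map for $Z$ itself vanishes, i.e.\ that $Z$ has unobstructed deformations and that $H^2(Z,\mathcal{T}_Z)\to$ (obstruction space) interacts well; more precisely I would show $\mathrm{Def}_Z$ is smooth and that the obstruction space $T^2_{QG,X}$ also identifies with $G$-invariants of the corresponding group for $Z$, which vanishes in the relevant sense because all obstructions to $\mathbb{Q}$-Gorenstein deformations are local-type obstructions already killed by the canonical covering structure.

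Concretely, the key steps are: (1) Establish $\mathrm{Def}_{C\times D}$ is smooth. Since $C,D$ are stable curves, $\mathrm{Def}_C$ and $\mathrm{Def}_D$ are smooth (the deformation theory of nodal curves is unobstructed — $H^2$ of the tangent complex vanishes for curves, or one uses that $\overline{M_g}$ is smooth), and $\mathrm{Def}_{C\times D}$ is smooth because deformations of a product of curves are products of deformations plus the smooth gluing contribution; one can see this via the Künneth decomposition of $\mathbb{T}^\bullet_{C\times D}$ and the fact that $\mathbb{T}^{\geq 2}$ vanishes for each factor, so no $H^2$-obstruction survives. (2) Show that $\mathbb{Q}$-Gorenstein deformations of $X$ are precisely the quotients of $G$-equivariant deformations of $Z$: this is essentially Lemma \ref{lemmaequivariantdeformation} globalized, together with Hacking's description (cited after Definition of $\mathcal{Def}^{QG}_X$) that $\mathbb{Q}$-Gorenstein deformations are exactly those induced locally by equivariant deformations of the canonical coverings. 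Because the canonical coverings of the singularities of type $(U_{1a}),(U_{1b}),(U_{2a}),(U_{2b})$ are exactly the local analytic germs of $Z$, the functor $\mathcal{Def}^{QG}_X$ is isomorphic to the $G$-equivariant deformation functor $\mathcal{Def}^G_Z$. (3) Deduce that $\mathcal{Def}^G_Z$ is pro-represented by a smooth base: it is the $G$-fixed locus of the smooth semiuniversal deformation of $Z$, and the fixed locus of a finite group acting on a smooth formal scheme (equivalently, on $\mathrm{Def}_Z$) is again smooth. (4) Conclude existence of the semiuniversal $\mathbb{Q}$-Gorenstein deformation and smoothness of $\mathrm{Def}^{QG}_X$; existence follows from Schlessinger/Hacking since the functor has a hull and the obstruction space argument shows it is actually smooth, not merely has a hull.

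The main obstacle I expect is step (2)–(3), namely pinning down rigorously that the global $\mathbb{Q}$-Gorenstein deformation functor of $X$ equals the $G$-equivariant ordinary deformation functor of $Z$ — in particular that the obstruction spaces match, not just the tangent spaces. Lemma \ref{lemmaequivariantdeformation} only handles $T^1$-level sheaves; I would need the analogous statement for $T^2$, i.e.\ that $\mathcal{T}^2_{QG,X}$ is the pushforward of the (vanishing, since $Z$ is smooth) local $T^2$ of $Z$, so that the only possible obstructions live in $H^2(Z,\mathcal{T}_Z)^G$ and $H^1(Z,\mathcal{T}^1_Z) = 0$ (the latter because $\mathcal{T}^1_Z$ is supported on a finite set, being zero where $Z$ is smooth — but $Z$ \emph{is} smooth, so $\mathcal{T}^1_Z = 0$ entirely, which actually trivializes much of this!). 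Indeed, since $Z=C\times D$ is a smooth fourfold (Prop.\ \ref{productofcurves} notwithstanding — it is smooth precisely when $C,D$ are, wait, no: stable curves are nodal, so $Z$ is singular along the nodes). Let me reconsider: $\mathcal{T}^1_Z$ is supported on $Z_{sing}$, a union of curves (products node $\times$ curve), so $H^1(Z,\mathcal{T}^1_Z)$ is cohomology of a sheaf on a $1$-dimensional locus, hence could be nonzero; the real content is that the obstruction map lands there and one must check $G$-invariant obstructions vanish. I would handle this by the explicit local structure: the local $\mathbb{Q}$-Gorenstein deformations of each germ $(U_{1a}),(U_{1b}),(U_{2a}),(U_{2b})$ are themselves unobstructed (the canonical covers $xy=0$ and $xy=zw=0$ in affine space have smooth, even explicit, miniversal spaces $xy=s$ and $xy=s_1, zw=s_2$), so $\mathcal{T}^2_{QG,X}=0$, whence $T^2_{QG,X} = H^2(X,\mathcal{T}_X) = H^2(Z,\mathcal{T}_Z)^G$; then unobstructedness of $X$ reduces to unobstructedness of $Z$, which is step (1). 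So the crux is really just: $\mathrm{Def}_{C\times D}$ is smooth, plus bookkeeping that the $\mathbb{Q}$-Gorenstein obstruction space is globally $H^2(\mathcal{T}_X)$ with no local contribution — both of which I regard as the technical heart of the argument, the rest being formal invariant-theory and Schlessinger-type reasoning.
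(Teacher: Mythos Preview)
Your approach is essentially that of the paper: identify $\mathbb{Q}$-Gorenstein deformations of $X$ with $G$-equivariant deformations of $Z=C\times D$ via Lemmas~\ref{lemmaequivariantcohomology} and~\ref{lemmaequivariantdeformation} and the Five Lemma on the two local-to-global sequences, invoke the smoothness of $\defo_Z=\defo_C\times\defo_D$, and conclude that the $G$-fixed base is smooth (the paper uses Cartan's lemma for this last step). The paper sidesteps your main worry about matching obstruction spaces by first constructing the candidate family $f^{-1}((\defo_Z)^G)/G\to(\defo_Z)^G$, observing its base is already smooth, and then checking only the tangent-space isomorphism $(T^1_Z)^G\cong T^1_{QG,X}$ --- the infinitesimal lifting property of a smooth base then gives semiuniversality with no $T^2$ comparison needed.
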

\begin{proof}
 Since $Z=C\times D$ is Gorenstein, $\defo_Z^{QG}$ exists and is just $\defo_Z$. Let $f\colon\mathcal Z\rightarrow \defo_Z$ be a semiuniversal deformation of $Z$. Then the action of $G$ on Z induces actions of $G$ on $\mathcal Z$ and $\defo_Z$ such that $f$ becomes a $G$-equivariant morphism. Taking the $G$-invariant part $\defo^G_Z$ of $\defo_Z$ and the $G$-quotient of $f^{-1}((\defo_Z)^G)$, we get a deformation of $X=Z/G$:
\begin{equation}\label{ginvariant}
 f^G\colon f^{-1}((\defo_Z)^G)/G\rightarrow (\defo_Z)^G.
\end{equation}

Note that $\defo_Z=\defo_C\times\defo_D$ is smooth (\cite[Cor. 2.3]{vO05} and \cite{DM}), so $(\defo_Z)^G$ is also smooth by Cartan's lemma. To prove the theorem, it suffices to show that (\ref{ginvariant}) is a \emph{semiuniversal} $\mathbb Q$-Gorenstein  deformation of $X$.

Since all the possible singularities on $X$ are of type $(U_{1a})$, $(U_{1b})$, $(U_{2a})$ or $(U_{2b})$, for any $P\in X$ and $Q\in\pi^{-1}(P)$, the germ $Q\in Z$ is the canonical covering of $P\in X$. So (\ref{ginvariant}) is in fact a $\mathbb Q$-Gorenstein deformation of $X$.

By the infinitesimal lifting property of a smooth variety (\cite[Ex. II.8.6]{Har}), if we can show that the natural map
\[
 d\lambda\colon(T^1_Z)^G\rightarrow T^1_{QG,X}
\]
is an isomorphism, then $f^G\colon f^{-1}((\defo_Z)^G)/G\rightarrow (\defo_Z)^G $ is an unobstructed \emph{semiuniversal} $\mathbb Q$-Gorenstein  deformation of $X$. Consider the following commutative diagram
\[
 \begin{matrix}
  0 &\rightarrow& H^1(Z,\mathcal T_Z)^G &\rightarrow&  (T^1_Z)^G      &\rightarrow&  H^0(Z,\mathcal{T}^1_Z)^G  &\rightarrow& H^2(Z,\mathcal T_Z)^G\\
   &            &      \downarrow\alpha        &         &  \downarrow d\lambda      &           &        \downarrow \beta        &         &    \downarrow \gamma  \\
 0 &\rightarrow&  H^1(X,\mathcal T_X)   &\rightarrow& T^1_{QG,X}      &\rightarrow&   H^0(X,\mathcal{T}^1_{QG,X})&\rightarrow& H^2(X,\mathcal{T}_X)
 \end{matrix}
\]
in which the rows are exact. We will prove that $\alpha,\beta,\gamma$ are isomorphisms, so that $d\lambda\colon(T^1_Z)^G\rightarrow T^1_{QG,X}$ is also an isomorphism by the Five Lemma.

Let $\mathcal{F}=\mathcal T_Z \text{ or } \mathcal{T}^1_Z$ in Lemma \ref{lemmaequivariantcohomology}, we get the following equations
\begin{align*}
 H^1(Z,\mathcal T_Z)^G & =H^1(X,\pi_*^G \mathcal T_Z),\\
 H^0(Z,\mathcal T^1_Z)^G  &=H^0(X,\pi_*^G\mathcal{T}^1_Z),\\
 H^2(Z,\mathcal T_Z)^G   &=H^2(X,\pi_*^G \mathcal T_Z).
\end{align*}

By Lemma \ref{lemmaequivariantdeformation}, we have $\pi_*^G \mathcal T_Z= \mathcal T_X,\pi_*^G\mathcal{T}^1_Z=\mathcal{T}^1_{QG,X}$. Therefore
\begin{align*}
 H^1(Z,\mathcal T_Z)^G  &=H^1(X,\mathcal T_X),\\
 H^0(Z,\mathcal T^1_Z)^G  &=H^0(X,\mathcal T^1_{QG,X}),\\
 H^2(Z,\mathcal T_Z)^G   &=H^2(X,\mathcal T_X),
\end{align*}
and $\alpha,\beta,\gamma$ are isomorphisms.
\end{proof}

\begin{remark}
It remains to address the case where $X$ has singularities of type $(U_{2c})$ or  $(M)$. The canonical coverings of these two types of singularities are not local complete intersections, which results in a more difficult $\mathbb{Q}$-Gorenstein deformation theory. In contrast to the infinitesimal consideration, there might be some hope for good properties of a one-parameter family of such singularities.
\end{remark}

\begin{corollary}\label{connectedcomponent}
 Let $S=(C\times D)/G$ be a surface isogenous to a product of unmixed type. Assume the pair $(C,G)$ is a triangle curve (i.e., $C/G\cong\mathbb{P}^1$, and $C\rightarrow C/G$ is branched over $3$ points). Let $M_S^{\text{top}}$ be the moduli space of smooth surfaces with the same topological type as $S$ and $\overline{M_S^{\text{top}}}$ the closure of $M_S^{\text{top}}$ in the moduli space $M^{st}_{a,b}$ with $a=K_S^2,b=\chi(\mathcal{O}_S)$. Then $\overline{M^{\text{top}}_S}$ consists of connected components of  $M^{st}_{a,b}$.
\end{corollary}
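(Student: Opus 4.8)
The plan is to combine Theorem \ref{surjective} with the theory of triangle curves to show that $\overline{M^{\text{top}}_S}$ is both open and closed in $M^{st}_{a,b}$. Closedness is immediate, since $\overline{M^{\text{top}}_S}$ is by definition closed in $M^{st}_{a,b}$; the substance is to prove that it is open, i.e., that every stable surface $X$ corresponding to a point of $\overline{M^{\text{top}}_S}$ has an open neighborhood in $M^{st}_{a,b}$ contained in $\overline{M^{\text{top}}_S}$. First I would invoke Theorem \ref{vO} and the discussion after it: a boundary point of $\overline{M^{\text{top}}_S}$ is of the form $X=(C_0\times D_0)/G$, where $\mathcal C\to\Delta$, $\mathcal D\to\Delta$ are $G$-equivariant smoothings of the stable curves $C_0,D_0$ with $(C_0\times D_0,G)$ admitting a free smoothing. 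The key point is that since $(C,G)$ is a triangle curve, the base $\defo_{C_0}$ of the deformation of $C_0$ carries a natural $G$-action whose quotient is (an open set in) $\overline{M_{0,3}}=\{\text{pt}\}$, so that \emph{every} $G$-equivariant deformation of $C_0$ within $\defo_{C_0}$ has smooth general fibre a triangle curve with the same group $G$ — and similarly for $D_0$. Hence the $G$-invariant deformation space $(\defo_{C_0\times D_0})^G=(\defo_{C_0})^G\times(\defo_{D_0})^G$ parametrizes exactly families whose general fibre is again $(C_t\times D_t)/G$ with $C_t,D_t$ smooth, i.e., surfaces isogenous to a product with the same $G$-action, which have the same topological type as $S$ by Theorem \ref{moduliisogenous} (these live in a single $M^{\text{top}}$, up to the complex-conjugation subtlety).

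Next I would check that the singularities of any such $X$ are of the restricted type required by Theorem \ref{surjective}. Because $(C,G)$ is a triangle curve, the local behaviour at nodes of $C_0,D_0$ is highly constrained: by Corollary \ref{singularityu}, the a priori possible singularities are $(U_{1a}),(U_{1b}),(U_{2a}),(U_{2b}),(U_{2c})$. The claim — which I expect to be the technical heart of the argument — is that under the triangle-curve hypothesis the types $(U_{2c})$ (and, being unmixed, $(M)$) cannot occur, so only $(U_{1a}),(U_{1b}),(U_{2a}),(U_{2b})$ appear; this is because $C/G\cong\mathbb P^1$ with only $3$ branch points forces the cyclic stabilizer $\langle\tau\rangle$ of a node to satisfy a rigidity that is incompatible with $\tau$ preserving both pairs of branches nontrivially (one must track how nodes of $C_0$ arise from the degeneration and what ramification data over the three branch points is compatible with a $G$-equivariant family over $\Delta$). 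Granting this, Theorem \ref{surjective} applies: the semiuniversal $\mathbb Q$-Gorenstein deformation $\defo_X^{QG}$ exists and is \emph{smooth}, and by the proof of that theorem it is realized as $(\defo_{C_0}\times\defo_{D_0})^G$ with the quotient family, i.e., a neighborhood of $[X]$ in $M^{st}_{a,b}$ is dominated by $\defo_X^{QG}$ whose general point is a surface isogenous to a product of unmixed type with group $G$.

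Finally I would assemble the openness statement. Since $\defo_X^{QG}$ is smooth and its general fibre is a surface isogenous to a product of the same topological type as $S$ (and since, by the projectivity and coarse-moduli property of Theorem \ref{compactmoduli}, étale-local structure of $M^{st}_{a,b}$ at $[X]$ is governed by $\defo_X^{QG}$ modulo the finite automorphism group of $X$), every point of $M^{st}_{a,b}$ sufficiently near $[X]$ lies in the closure of the locus of such smooth surfaces, namely in $\overline{M^{\text{top}}_S}$ (allowing for the two conjugate components when $M^{\text{top}}_S$ is disconnected, in which case one takes the union of their closures). Thus $\overline{M^{\text{top}}_S}$ is open; being also closed and $M^{st}_{a,b}$ being a scheme of finite type, it is a union of connected components. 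The main obstacle, as indicated, is the local singularity analysis showing that the triangle-curve hypothesis rules out the types $(U_{2c})$ and $(M)$ whose obstruction theory is not controlled by Theorem \ref{surjective}; a secondary point requiring care is the passage from the smooth deformation space $\defo_X^{QG}$ to an actual open neighborhood in the coarse moduli space, which I would handle via the standard fact that $M^{st}_{a,b}$ is étale-locally $\defo_X^{QG}/\mathrm{Aut}(X)$ with $\mathrm{Aut}(X)$ finite.
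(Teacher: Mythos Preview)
Your overall strategy --- show $\overline{M^{\text{top}}_S}$ is open by proving $\defo_X^{QG}$ is smooth for every $[X]\in\overline{M^{\text{top}}_S}$ via Theorem~\ref{surjective} --- is the paper's strategy, and the assembly in your final paragraph is fine. But you miss the one idea that makes the proof work and replace it with a non-argument.

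The point you nearly state but fail to use is that a triangle curve is \emph{rigid}: the $G$-equivariant deformation space of $(C,G)$ is zero-dimensional (you essentially note this when you write $\overline{M_{0,3}}=\{\text{pt}\}$). The correct consequence is not about deformations \emph{of} the central fibre $C_0$, but about the degeneration producing $C_0$: in any $G$-equivariant family $\mathcal C\to\Delta$ whose general fibre is the triangle curve $C$, the family is isotrivial and the central fibre is again $C$ (or $\overline C$), in particular \emph{smooth}. Hence every $[X]\in\overline{M^{\text{top}}_S}$ is of the form $X=(C'\times D_0)/G$ with $C'$ a smooth triangle curve and only $D_0$ possibly nodal. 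Since the $(U_2)$-types in Corollary~\ref{singularityu} require a node on \emph{each} factor, they are all excluded at once, and the only singularities of $X$ are of type $(U_{1a})$ or $(U_{1b})$. Theorem~\ref{surjective} then applies immediately; this is exactly the paper's proof.

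By contrast, you allow $C_0$ to acquire nodes and then propose, as the ``technical heart'', to rule out $(U_{2c})$ via a local analysis of stabilizers at a pair of nodes. The sentence offered in support --- that three branch points on $C/G$ force ``a rigidity incompatible with $\tau$ preserving both pairs of branches nontrivially'' --- is not an argument: the local description of a node stabilizer in Proposition~\ref{smoothingu} sees nothing of the global branching data, and there is no mechanism in what you wrote that distinguishes $(U_{2c})$ from $(U_{2a})$ or $(U_{2b})$. The difficulty you anticipate does not exist; use rigidity to keep $C$ smooth and the singularity restriction is a one-line consequence of Corollary~\ref{singularityu}.
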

\begin{proof}
 By \cite{Cat03}, every closed point in  $M_S^{\text{top}}$ corresponds to a surface $S'$ isogenous to a product with representation $(C'\times D')/G$. In our case, since a triangle curve is rigid, we can assume $(C',G)$ is also a triangle curve. In fact, $(C',G)=(C,G)\text{ or } (\overline{C},G)$. For the same reason,  $(C',G)$ remains the same in the process of degeneration. If $[X]$ is in $\overline{M_S^{\text{top}}}$, then $X=(C'\times D)/G$, where $D$ is a stable curve. By Prop. \ref{smoothingu} and Cor. \ref{singularityu}, the possible singularities of $X$ are of type $(U_{1a})$ or $(U_{1b})$. Therefore  $\defo^{QG}_X$ is defined and is smooth by Theorem \ref{surjective}. This implies that $M^{st}_{a,b}$ is irreducible at $[X ]$ and the assertion of the corollary follows.
\end{proof}


\begin{thebibliography}{9}
\bibitem[A94]{A94}
Alexeev, V. Boundedness and $K^2$ for log surfaces.  Internat. J. Math.  5  (1994),  no. 6, 779--810.
\bibitem[AbH09]{AbH09}
Abramovich, D.; Hassett, B. Stable varieties with a twist, Preiprint arXiv:0904.2797v1, 2009.
\bibitem[AP09]{AP09}
Alexeev, V.; Pardini, R. Explicit compactifications of moduli spaces of Campedelli and Burniat surfaces, Preprint arXiv:0901.4431v1, 2009.
\bibitem[Cat00]{Cat00}
Catanese, F. Fibred surfaces, varieties isogenous to a product and related moduli spaces.  Amer. J. Math.  122  (2000),  no. 1, 1--44.
\bibitem[Cat03]{Cat03}
Catanese, F. Moduli spaces of surfaces and real structures.  Ann. of Math. (2)  158  (2003),  no. 2, 577--592.
\bibitem[DM69]{DM}
Deligne, P.; Mumford, D. The irreducibility of the space of curves of given genus.  Inst. Hautes \'{E}tudes Sci. Publ. Math. No. 36 (1969), 75--109.
\bibitem[G57]{G57}
Grothendieck, A. Sur quelques points d'alg\'ebre homologique. Tohoku Math. J. (2) 9 (1957), 119--221.
\bibitem[Hac01]{Hac01}
Hacking, P. A Compactification of the spaces of plane curves. Thesis, 2001, arXiv:math/0104193v1.
\bibitem[Hac04]{Hac04}
Hacking, P. Compact moduli of plane curves.  Duke Math. J.  124  (2004),  no. 2, 213--257.
\bibitem[Har77]{Har}
Hartshorne, R. Algebraic geometry. Graduate Texts in Mathematics, No. 52. Springer-Verlag, 1977.
\bibitem[HK04]{HK04}
Hassett, B.; Kov\'{a}cs, S. J. Reflexive pull-backs and base extension.  J. Algebraic Geom.  13  (2004),  no. 2, 233--247.
\bibitem[Ko90]{Ko90}
Koll\'{a}r, J. Projectivity of complete moduli.  J. Differential Geom.  32  (1990),  no. 1, 235--268.
\bibitem[Ko08]{Ko08}
Koll\'{a}r, J. Hulls and husks. Preprint arXiv:0805.0576v3, 2008.
\bibitem[KoM98]{KoM}
Koll\'{a}r, J.; Mori, S.; Birational geometry of algebraic varieties. Camb. Tracts in Math., 134. Camb. Univ. Press, 1998.
\bibitem[KoSB88]{KoSB}
Koll\'{a}r, J.; Shepherd-Barron, N. I. Threefolds and deformations of surface singularities.  Invent. Math.  91  (1988),  no. 2, 299--338.
\bibitem[Kov09]{Kov09}
Kov\'acs, S. J. Young person's guide to moduli of higher dimensional varieties.  Algebraic geometry--Seattle 2005. Part 2,  711--743, Proc. Sympos. Pure Math., 80, Amer. Math. Soc., Providence, RI, 2009.
\bibitem[R09]{R09}
Rollenske, S. Compact moduli for certain Kodaira fibrations. Preprint arXiv:0905.2536v2, 2009.
\bibitem[T08]{T08}
Tziolas, N. Smoothings of schemes with non-isolated singularities, Preprint arXiv:0808.0436v2, 2008.
\bibitem[T09]{T09}
Tziolas, N. $\mathbb{Q}$-Gorenstein deformations of nonnormal surfaces. Amer. J. Math.  131  (2009),  no. 1, 171--193.
\bibitem[vO05]{vO05}
van Opstall, M. A. Moduli of products of curves.  Arch. Math. (Basel)  84  (2005),  no. 2, 148--154.
\bibitem[vO06]{vO06}
van Opstall, M. A. Stable degenerations of surfaces isogenous to a product of curves.  Proc. Amer. Math. Soc.  134  (2006),  no. 10, 2801--2806.
\end{thebibliography}
\end{document}